\newtheorem{theorem}{Theorem}[section]
\newtheorem{proposition}{Proposition}[section]
\newtheorem{lemma}{Lemma}[section]
\newtheorem{definition}{Definition}[section]
\newtheorem{corollary}{Corollary}[section]
\newtheorem{remark}{Remark}[section]
\begin{document}
	\title{Nonexistence of weakly stable Yang-Mills fields}
	\author{Xiaoli Han}
	\address{Xiaoli Han\\ Math department of Tsinghua university\\ Beijing\\ 100084\\ China\\} \email{hanxiaoli@mail.tsinghua.edu.cn}
	\author{Yang Wen}
	\address{Yang Wen\\ Beijing International Center for Mathematical Research\\ Peking University\\ Beijing\\ 100871\\ China\\} \email{2406397066@pku.edu.cn}
	\begin{abstract}
		In this paper we prove that there is a neighborhood in the  $C^2$ topology of the usual metric on the
		Euclidean sphere $S^n (n\geq 5)$ such that there is no nontrivial weakly stable Yang-Mills connections for any metric $\tilde{g}$ in this neighborhood. We also study the stability of Yang-Mills connections on the warped product manifolds.
	\end{abstract}
	\subjclass{AMS Mathematics Subject Classification. 53C55,\ 32W20.}
	\maketitle
	\section{Introduction}\  
	Let $(M,g)$ be an $n$-dimensional Riemannian manifold and $E\to M$ a vector bundle of rank $r$ endowed with a structure group $G$ and compatible Riemannian metric $\langle\ ,\ \rangle$. The Yang-Mills functional on the affine space of metric-compatible connections of $E$ is defined as
	\begin{align*}
		\operatorname{YM}(\nabla)=\frac12\int_M|R^\nabla|_g^2dV_g,
	\end{align*}
	where $\nabla$ is a connection of $E$ preserving $\langle\ ,\ \rangle$ and $R^\nabla$ denotes its curvature form. Let $d^\nabla$ be the exterior covariant derivative induced by $\nabla$, and $\delta^\nabla$ its formal $L^2$-adjoint. The associated Euler-Lagrange equation takes the form
	\begin{align}\label{YM equation}
		\delta^\nabla R^\nabla=0,
	\end{align}
	with solutions termed Yang-Mills connections. 
	
	Our work focuses on locally minimizing Yang-Mills connections – critical points of $\operatorname{YM}$ achieving local minima. Such a connection $\nabla$ is called weakly stable if the second variation of $\operatorname{YM}$ is non-negative at $\nabla$:
	\begin{align*}
		\frac{d^2}{dt^2}\operatorname{YM}(\nabla^t)\mid_{t=0}\ge0
	\end{align*}
	for any smooth curve of connections $\nabla^t$ with $\nabla^0=\nabla$.
	
	The stability analysis of Yang-Mills functionals originates in Xin's seminal work \cite{Xin}, which established that no non-constant stable harmonic map exist from $S^n (n\ge3)$ to any Riemannian manifold. Adapting this methodology, Bourguignon and Lawson \cite{BL} proved that there is nontrivial weakly stable Yang-Mills connections on $S^n$ for $n\ge5$. They also proved that if the structure group $G=SU(2), SU(3)$, or $U(2)$, then the weakly stable Yang-Mills connection on $S^4$ is self-dual or anti self-dual. Laquer \cite{Laquer} extended this program by classifying symmetric manifolds admitting unstable Yang-Mills fields. Through explicit computation of the Yang-Mills functional's Morse index and nullity, it was shown that compact symmetric spaces admit weakly stable Yang-Mills connections except for $S^n (n\ge5)$, compact simple Lie groups, $SP(p+1)/SP(p)\times SP(1)$, $E_6/F_4$ and $F_4/Spin(9)$. Further developments by Kobayashi, Ohnita, and Takeuchi \cite{SYM} demonstrated that all Yang-Mills connections on $P^2(\mathbb{C}ay)$ and $E_6/F_4$ is unstable. Most recently, Ni \cite{Ni} proved that the Morse index of $\operatorname{YM}$ on $S^n (n\ge5)$ is larger than $m+1$, and the smallest eigenvalue of the Jacobi operator of $\operatorname{YM}$ is not greater than $4-n$.
	
	Lawson and Simons in \cite{LS} proved that there are no compact stable minimal submanifolds in the Euclidean spheres. They conjectured that there are no stable minimal submanifolds in any compact, simply connected, Riemannian manifold $M$ that is $1/4$-pinched. Here, by $\delta$-pinched for some $\delta>0$, we mean that at each point of $M$, the sectional curvatures are positive and the ratio between the smallest and the largest sectional curvatures of $M$, at that point, is strictly bigger than $\delta$. This conjecture has been verified for several classes of manifolds. Lawson and Simons \cite{LS} show it holds if $M^n$ can be isometrically immersed in a standard sphere with sufficiently small second fundamental form.  Howard \cite{H} showed that for each $n \geqq 3$, there exists a constant $\delta(n,p)$ satisfying $1 / 4<\delta(n,p)<1$ such that if $M^n$ is a simply-connected compact strictly $\delta(n,p)$-pinched Riemannian manifold of dimension $n$, then there are no nonconstant $p-$dimensional stable minimal submanifolds. But unfortunately $\lim _{n \rightarrow \infty} \delta(n)=1$. Later, Shen and He \cite{SH} proved that there are no compact stable minimal submanifolds in a compact simply-connected $0.77$-pinched Riemannian manifold. 
	
	Naturally, there is a "harmonic version" of this famous conjecture on stable minimal submanifolds. Howard \cite{H}  proved there exists a constant $1/4<\delta(n)<1$ such that if $M^n$ is a simply-connected compact strictly $\delta(n)$-pinched Riemannian manifold of dimension $n$, then there are no nonconstant stable harmonic maps $\psi: N\to M$ for any compact manifold $N$.  Okayasu gave a dimension-independent pinching constant $\delta=0.83$ in \cite{Ok}. 
	
	Similarly, we can consider "Yang-Mills version" of this conjecture.
	
	{\it Question:} Let $M$ be compact, simply connected Riemannian manifold that is $1/4$-pinched. Is every stable Yang-Mills connection on $M$  flat ?
	
	Ohnita and Pan \cite{OP} confirmed this question when $M$ is $\delta(n)-$ pinched. But in their argument it is not possible to find a pinching constant $\delta$  independent of the
	dimension of the base manifold $M$. We aim to address this problem in the conformal class of $S^n$.Conformal metrics are indispensable in geometric PDE theory, where their scale invariance simplifies blow-up analysis and resolves singularities through controlled rescalings. Han \cite{Han} excluded stable quasi-harmonic spheres in Gaussian conformal spaces. Cherif-Djaa-Zegga \cite{AMK} proved that under $\operatorname{Hess}(e^{(n-2)\varphi})\le0$, any stable harmonic map from $(S^n,e^{2\varphi}g_{standard})$ to a Riemannian manifold is a constant map. Franz-Trinca \cite{FT} established non-existence of closed stable minimal $k$-submanifolds in generic conformal spheres $(S^n,\tilde g)$ for dimensions $2\le k\le n-\delta^{-1}$, where $\delta$ dependents on $\tilde g$.
	
	The purpose of the present work is to study the existence of weakly stable Yang-Mills connections on conformal spheres. We prove the following theorem.
	\begin{theorem}\label{thm 1}
		Assume $(S^n,g)$ is the standard sphere and $\tilde g=e^{2\varphi}g$ is a conformal metric. If $n\ge5$ and
		\begin{align*}
			\frac12\Delta\varphi-\frac{n-4}2|\operatorname{grad}(\varphi)|_g^2+2>0,
		\end{align*}
		then there is no nontrivial weakly stable Yang-Mills connections on $(S^n,\tilde g)$.
	\end{theorem}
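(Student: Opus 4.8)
The plan is to adapt the Bourguignon--Lawson instability argument \cite{BL} to the conformal metric $\tilde g=e^{2\varphi}g$ by testing the second variation of $\operatorname{YM}_{\tilde g}$ against the deformations $a_V=\iota_V R^\nabla$ built from the conformal vector fields of the round sphere, and to show that their total contribution is strictly negative under the stated hypothesis. Concretely, realize $S^n\subset\mathbb{R}^{n+1}$ with position/unit normal $x=(x_1,\dots,x_{n+1})$, and let $V_a=\operatorname{grad}_g(x_a)$, $a=1,\dots,n+1$, be the tangential projections of the coordinate fields; these satisfy $\nabla^g_X V_a=-x_a X$ together with the algebraic identities $\sum_a x_a^2=1$, $\sum_a x_a V_a=0$, and $\sum_a V_a\otimes V_a=\operatorname{Id}$. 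If I can prove that $\sum_{a}\frac{d^2}{dt^2}\operatorname{YM}_{\tilde g}(\nabla+t\,\iota_{V_a}R^\nabla)\big|_{t=0}<0$ whenever $R^\nabla\not\equiv0$, then at least one summand is negative, contradicting weak stability; since a flat connection on the simply connected $S^n$ is trivial, this yields the theorem.

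The first step is a clean reduction to the fixed background $g$. Because $\iota_V$ and the exterior covariant derivative $d^\nabla$ are metric independent, and because for an $\operatorname{ad}E$-valued $2$-form $\beta$ one has $|\beta|^2_{\tilde g}\,dV_{\tilde g}=e^{(n-4)\varphi}|\beta|^2_g\,dV_g$, the second variation (whose first-order term vanishes by $\delta^{\nabla,\tilde g}R^\nabla=0$) becomes
\[
\frac{d^2}{dt^2}\operatorname{YM}_{\tilde g}(\nabla+t\,\iota_{V}R^\nabla)\big|_{t=0}=\int_{S^n}e^{(n-4)\varphi}\big(|d^\nabla(\iota_V R^\nabla)|_g^2+\langle R^\nabla,[\iota_V R^\nabla\wedge\iota_V R^\nabla]\rangle_g\big)\,dV_g .
\]
Next I would compute $d^\nabla(\iota_{V_a}R^\nabla)=\nabla_{V_a}R^\nabla-2x_a R^\nabla$ from $\nabla^g_X V_a=-x_a X$ and the second Bianchi identity, and sum over $a$. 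Using the three algebraic identities above, the cross term drops out (via $\sum_a x_a V_a=0$), and I obtain the pointwise identities $\sum_a|d^\nabla(\iota_{V_a}R^\nabla)|_g^2=|\nabla R^\nabla|_g^2+4|R^\nabla|_g^2$ and $\sum_a\langle R^\nabla,[\iota_{V_a}R^\nabla\wedge\iota_{V_a}R^\nabla]\rangle_g=T$, where $T$ is a quartic curvature expression.

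The heart of the argument is the weighted integration by parts of $\int_{S^n}e^{(n-4)\varphi}|\nabla R^\nabla|_g^2$. Setting $w=e^{(n-4)\varphi}$ and integrating by parts twice produces a term $-\tfrac12\int(\Delta w)|R^\nabla|^2_g$ carrying the derivatives of the weight, together with $\int w\langle\nabla^*\nabla R^\nabla,R^\nabla\rangle_g$. To handle the latter I would apply the Weitzenb\"ock formula on $(S^n,g)$: since $d^\nabla R^\nabla=0$, $\nabla^*\nabla R^\nabla=d^\nabla\delta^{\nabla,g}R^\nabla-2(n-2)R^\nabla-\mathcal{W}_{\mathrm{bundle}}(R^\nabla)$, where $2(n-2)$ is the round-sphere Weitzenb\"ock constant on $2$-forms and $\langle\mathcal{W}_{\mathrm{bundle}}(R^\nabla),R^\nabla\rangle_g=T$. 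Two cancellations are then decisive. First, the conformal transformation law of the codifferential turns the $\tilde g$-Yang--Mills equation into $\delta^{\nabla,g}R^\nabla=(n-4)\iota_{\operatorname{grad}\varphi}R^\nabla$; since $\operatorname{grad} w=(n-4)w\,\operatorname{grad}\varphi$, this gives $\delta^{\nabla,g}(wR^\nabla)=0$, and hence the term $\int w\langle d^\nabla(\iota_{\operatorname{grad}\varphi}R^\nabla),R^\nabla\rangle_g$ vanishes identically after integration by parts. Second, the quartic term $T$ cancels against the bundle Weitzenb\"ock term. What survives is purely quadratic in $R^\nabla$, and expanding $\Delta w$ in terms of $\varphi$ collects the sum into
\[
\sum_a\frac{d^2}{dt^2}\operatorname{YM}_{\tilde g}(\nabla+t\,\iota_{V_a}R^\nabla)\big|_{t=0}=-(n-4)\int_{S^n}e^{(n-4)\varphi}\Big(\tfrac12\Delta\varphi-\tfrac{n-4}{2}|\operatorname{grad}\varphi|_g^2+2\Big)|R^\nabla|_g^2\,dV_g .
\]
I expect the main obstacle to be exactly this weighted Weitzenb\"ock bookkeeping: verifying that the quartic curvature contributions cancel precisely (so that no indefinite $T$-term survives, just as in the flat Bourguignon--Lawson case) and that the first-order weight term vanishes through the conformal Yang--Mills equation, while keeping all constants and the Laplacian sign convention consistent so that the coefficient lands exactly on $\tfrac12\Delta\varphi-\tfrac{n-4}{2}|\operatorname{grad}\varphi|_g^2+2$. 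Granting these, the hypothesis makes the integrand strictly positive; for $n\ge5$ the prefactor $-(n-4)$ is negative, so the total second variation is negative unless $R^\nabla\equiv0$, which completes the proof.
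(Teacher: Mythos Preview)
Your proposal is correct and follows essentially the same strategy as the paper: test the second variation against $B_v=\iota_V R^\nabla$ for the round conformal fields $V$, sum over an orthonormal basis of $\mathbb{R}^{n+1}$, and arrive at the identical formula $-(n-4)\int_{S^n} e^{(n-4)\varphi}\bigl(\tfrac12\Delta\varphi-\tfrac{n-4}{2}|\operatorname{grad}\varphi|_g^2+2\bigr)|R^\nabla|_g^2\,dV_g$, which is exactly Proposition~\ref{sum L(B_V_k)} with $\lambda=0$. The only difference is organizational: the paper computes the operator $\mathscr{S}(B_v)$ term by term in Lemma~\ref{S(iV_R)} before summing, whereas you sum pointwise first to $|\nabla R^\nabla|_g^2+4|R^\nabla|_g^2+T$ and then run a weighted Weitzenb\"ock integration by parts, using the clean identity $\delta^{\nabla,g}\bigl(e^{(n-4)\varphi}R^\nabla\bigr)=0$ (equivalent to the $\tilde g$--Yang--Mills equation) to dispose of the first-order weight term.
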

	
	Suppose \(\varphi\) is a small perturbation of a constant, then $\varphi$ satisfies the condition in the above theorem.
	\begin{corollary}
		There is a neighborhood in the  $C^2$ topology of the usual metric on the
		Euclidean sphere $S^n (n\geq 5)$ such that there is no nontrivial weakly stable Yang-Mills connections for any metric $\tilde{g}$ in this neighborhood.
	\end{corollary}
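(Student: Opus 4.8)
The plan is to prove the corollary by perturbing the Bourguignon--Lawson instability computation directly, rather than by reducing to Theorem \ref{thm 1}: an arbitrary $C^2$-small perturbation $\tilde g$ of the round metric $g$ need not be conformal to $g$, so the conformal case settled by Theorem \ref{thm 1} does not by itself cover the full neighborhood. The key is to regard the \emph{summed} second variation over the standard family of test deformations as a quantity depending continuously on the metric in the $C^2$ topology. Fix the round metric $g$, let $a^T$ denote the tangential projection to $S^n$ of a constant vector $a\in\mathbb{R}^{n+1}$, and for any connection $\nabla$ put $B_a=\iota_{a^T}R^\nabla\in\Omega^1(\operatorname{ad}E)$. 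For the round metric, the computation of \cite{BL} shows that for a Yang--Mills connection $\nabla$, summing over an orthonormal basis $\{a_i\}_{i=1}^{n+1}$ of $\mathbb{R}^{n+1}$ gives
\[
\sum_{i=1}^{n+1}\frac{d^2}{dt^2}\operatorname{YM}_g(\nabla+tB_{a_i})\Big|_{t=0}=2(4-n)\operatorname{YM}_g(\nabla),
\]
which is strictly negative for $n\ge5$ whenever $\nabla$ is nontrivial. First I would re-run this derivation for a general metric $\tilde g$, keeping every metric-dependent factor explicit.

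The structural fact that makes the perturbation work is that the summed second variation collapses to a purely algebraic integrand. Using $B_a=\iota_{a^T}R^\nabla$ and the (metric-independent) Bianchi identity $d^\nabla R^\nabla=0$, the covariant Cartan formula gives $d^\nabla B_a=\mathcal L^\nabla_{a^T}R^\nabla$, whose square contributes first-order terms $|\nabla^{\tilde g}R^\nabla|^2$; but because $\nabla$ solves the Yang--Mills equation $\delta^{\tilde g}R^\nabla=0$, the Hodge Laplacian $(d^\nabla\delta^\nabla+\delta^\nabla d^\nabla)R^\nabla$ vanishes, and the Weitzenb\"ock formula converts $\int_{S^n}|\nabla^{\tilde g}R^\nabla|^2_{\tilde g}\,dV_{\tilde g}$ into $\int_{S^n}\langle\mathcal R^{\tilde g}(R^\nabla),R^\nabla\rangle_{\tilde g}\,dV_{\tilde g}$, a curvature term with no derivatives of $R^\nabla$. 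After summation over $i$ the whole expression therefore takes the form
\[
\sum_{i=1}^{n+1}\frac{d^2}{dt^2}\operatorname{YM}_{\tilde g}(\nabla+tB_{a_i})\Big|_{t=0}=\int_{S^n}\mathcal C^{\alpha\beta}(\tilde g)\,\langle R^\nabla_\alpha,R^\nabla_\beta\rangle\,dV_{\tilde g},
\]
where the coefficient tensor $\mathcal C(\tilde g)$ is assembled algebraically from $\tilde g$, its Christoffel symbols, and its curvature, hence from $\tilde g$ together with its first two derivatives. Thus $\mathcal C(\tilde g)$ depends continuously on $\tilde g$ in the $C^2$ topology, and $\mathcal C(g)$ realizes the quadratic form $(4-n)|R^\nabla|^2_g$ underlying the round-metric identity above.

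Because this integrand is pointwise quadratic in the curvature, the comparison with the round case is \emph{uniform in the connection}: there is a constant $C=C(n)$ such that, pointwise,
\[
\mathcal C^{\alpha\beta}(\tilde g)\,\langle R^\nabla_\alpha,R^\nabla_\beta\rangle\le\bigl[(4-n)+C\|\tilde g-g\|_{C^2}\bigr]\,|R^\nabla|^2_{\tilde g},
\]
independently of $\nabla$. Integrating and recalling $\operatorname{YM}_{\tilde g}(\nabla)=\tfrac12\int_{S^n}|R^\nabla|^2_{\tilde g}\,dV_{\tilde g}$ yields
\[
\sum_{i=1}^{n+1}\frac{d^2}{dt^2}\operatorname{YM}_{\tilde g}(\nabla+tB_{a_i})\Big|_{t=0}\le\bigl[(4-n)+C\|\tilde g-g\|_{C^2}\bigr]\cdot2\operatorname{YM}_{\tilde g}(\nabla).
\]
I would then shrink the $C^2$-neighborhood of $g$ so that $C\|\tilde g-g\|_{C^2}<n-4$; since $n\ge5$ the bracket is strictly negative. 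For any nontrivial $\nabla$ the energy $\operatorname{YM}_{\tilde g}(\nabla)$ is positive, so the summed second variation is negative, forcing at least one $B_{a_i}$ to have negative second variation and contradicting weak stability. Hence no nontrivial weakly stable Yang--Mills connection exists for any $\tilde g$ in this neighborhood.

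The main obstacle is the second step: carrying out the Bourguignon--Lawson integration by parts for a general metric while controlling the metric dependence, and in particular verifying that the first-order terms $|\nabla^{\tilde g}R^\nabla|^2$ really are absorbed by the Weitzenb\"ock identity into $C^2$-continuous algebraic coefficients with no uncontrolled derivatives of $R^\nabla$ surviving. On the round sphere this is clean because the fields $a^T$ are conformal, so $\nabla^g a^T$ is pure trace and many cross terms cancel upon summation; for $\tilde g$ these fields are no longer conformal, and one must check that the extra terms produced are all bounded by $\|\tilde g-g\|_{C^2}|R^\nabla|^2_{\tilde g}$. Once this uniform-in-$\nabla$ quadratic estimate is established, the choice of neighborhood and the resulting contradiction are immediate, and for metrics conformal to $g$ the same mechanism reduces to Theorem \ref{thm 1} with $C^2$-small $\varphi$.
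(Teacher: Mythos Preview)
The paper does not prove the corollary the way you do: it simply remarks that if $\tilde g=e^{2\varphi}g$ with $\varphi$ $C^2$-small, then $\frac12\Delta\varphi-\frac{n-4}{2}|\operatorname{grad}\varphi|_g^2+2>0$ and Theorem~\ref{thm 1} applies. You are right that this literally covers only the conformal slice of a $C^2$-neighborhood, so your proposal aims at a genuinely stronger statement than what the paper's one-line argument delivers.

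Your strategy---perturb the Bourguignon--Lawson summed second variation and show it stays strictly negative---is the correct one for the general statement, but the specific structural claim you rely on has a gap. For non-conformal $\tilde g$ the test fields $V_k=a_k^T$ satisfy $\tilde D_XV_k=-f_{v_k}X+\Gamma(X,V_k)$ with $\Gamma=O(\|\tilde g-g\|_{C^1})$, and in $d^\nabla(\iota_{V_k}R^\nabla)=\nabla_{V_k}R^\nabla+R^\nabla(\tilde D_\cdot V_k,\cdot)-R^\nabla(\cdot,\tilde D_\cdot V_k)$ this produces cross terms $\int\langle\nabla_{V_k}R^\nabla,\,R^\nabla(\Gamma(\cdot,V_k),\cdot)-R^\nabla(\cdot,\Gamma(\cdot,V_k))\rangle$. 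The Weitzenb\"ock step you invoke absorbs only the diagonal piece $\sum_k|\nabla_{V_k}R^\nabla|^2=|\nabla R^\nabla|^2$; it does \emph{not} touch these cross terms, which are first order in $R^\nabla$ and do not automatically collapse to a pointwise algebraic form $\mathcal C(\tilde g)(R^\nabla,R^\nabla)$ as you assert. To finish you must either (i) integrate these cross terms by parts once more, checking that what survives involves $D\Gamma$ and hence is a zeroth-order quadratic form in $R^\nabla$ with coefficients $O(\|\tilde g-g\|_{C^2})$, or (ii) drop the exact algebraic structure and bound them via Cauchy--Schwarz combined with the Weitzenb\"ock estimate $\int|\nabla R^\nabla|^2_{\tilde g}\le C(\|\tilde g\|_{C^2})\int|R^\nabla|^2_{\tilde g}$, which holds since $\Delta^\nabla R^\nabla=0$ for a Yang--Mills connection. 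Either route yields the inequality $\sum_k\mathscr L(B_{v_k})\le\big[(4-n)+C\|\tilde g-g\|_{C^2}\big]\cdot 2\operatorname{YM}_{\tilde g}(\nabla)$ you want, but neither is free, and your proposal currently asserts the conclusion of step (i) without carrying it out.
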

	Since the Yang-Mills functional is invariant under gauge transformation, it is meaningless to define a connection is stable if the second variation is positive. However, we define a connection as stable when the second variation is positive in all directions orthogonal to gauge transformations (see Definition \ref{def:stable} or \cite{BL} (6.12)). We obtain the following result.
	\begin{theorem}\label{thm 2}
		Assume $(S^n,g)$ is the standard sphere and $\tilde g=e^{2\varphi}g$ is a conformal metric. If
		\begin{equation*}
			(n-4)\left[\frac{1}2\Delta\varphi-\frac{n+4}2|\operatorname{grad}(\varphi)|_g^2+2\right]\ge0,
		\end{equation*}
		then there is no nontrivial stable Yang-Mills connections on $(S^n,\tilde g)$. Especially, there is no nontrivial stable Yang-Mills connections on $(S^4,\tilde g)$.
	\end{theorem}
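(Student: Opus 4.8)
The plan is to convert the statement into a weighted Yang-Mills problem on the round sphere. Because curvature two-forms are conformally natural, $|R^\nabla|^2_{\tilde g}=e^{-4\varphi}|R^\nabla|^2_g$ and $dV_{\tilde g}=e^{n\varphi}\,dV_g$, so that $\operatorname{YM}_{\tilde g}(\nabla)=\tfrac12\int_{S^n}e^{(n-4)\varphi}|R^\nabla|^2_g\,dV_g$. Writing $w=e^{(n-4)\varphi}$, I would study the weighted functional $\tfrac12\int_{S^n}w\,|R^\nabla|^2_g\,dV_g$ on the \emph{standard} sphere, whose Euler-Lagrange equation is $\delta^\nabla_g(wR^\nabla)=0$ and whose second variation in a direction $a\in\Omega^1(\operatorname{ad}E)$ is $\int_{S^n}w\bigl(|d^\nabla a|^2_g+\langle R^\nabla,[a\wedge a]\rangle_g\bigr)\,dV_g$; the first-order term vanishes by the weighted equation.

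For the test directions I would take the $n+1$ conformal gradient fields $V_a=\operatorname{grad}_g x_a$ associated with the coordinate functions of $S^n\subset\mathbb{R}^{n+1}$, which satisfy $\operatorname{Hess}_g x_a=-x_a\,g$, $\sum_a V_a\otimes V_a=g$, and $\sum_a x_a^2=1$, and form $\omega_a=i_{V_a}R^\nabla$. Using the Bianchi identity $d^\nabla R^\nabla=0$ to rewrite $d^\nabla\omega_a$ through the covariant Lie derivative along $V_a$, and then contracting the Hessian and completeness identities for the $V_a$, the sum $\sum_a\bigl(|d^\nabla\omega_a|^2_g+\langle R^\nabla,[\omega_a\wedge\omega_a]\rangle_g\bigr)$ collapses to a pointwise multiple of $|R^\nabla|^2_g$. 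The derivatives of the weight $w$ produced by the integrations by parts contribute the $\Delta\varphi$ and $|\operatorname{grad}\varphi|^2_g$ terms, while $\operatorname{Ric}_g=(n-1)g$ supplies the constant; this is exactly the mechanism already underlying Theorem~\ref{thm 1}.

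The new ingredient for Theorem~\ref{thm 2} is that \emph{strict} stability only tests directions orthogonal to the gauge orbit, i.e.\ $a$ with $\delta^\nabla_{\tilde g}a=0$. Since $\operatorname{YM}$ is gauge invariant, its Hessian annihilates every gauge direction $d^\nabla f$, so a pure gauge variation carries no information; I must instead exhibit a genuinely gauge-orthogonal, \emph{nonzero} test field whose second variation is $\le0$, and it is enough to reach $\le0$ rather than $<0$ -- this is why the hypothesis is an inequality and why the borderline dimension $n=4$ is admissible. Concretely I would coclose the family $\omega_a$ with respect to $\tilde g$ by rescaling with an appropriate power of $e^{\varphi}$ (or by subtracting a Hodge-type correction $d^\nabla f_a$); the extra factor feeds additional $|\operatorname{grad}\varphi|^2_g$ into the collapsed integrand, which I expect to move the gradient coefficient from $\tfrac{n-4}2$ to $\tfrac{n+4}2$ and to factor out the overall $(n-4)$. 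Granting this, the hypothesis forces $\sum_a\operatorname{II}(\omega_a^{\perp},\omega_a^{\perp})\le0$; a non-flat $\nabla$ then has a nonzero gauge-orthogonal direction of nonpositive second variation, so it cannot be strictly stable, and only flat connections survive. When $n=4$ the factor $(n-4)$ makes this sum vanish identically, i.e.\ the gauge-orthogonal direction is a genuine null mode -- the analytic shadow of the positive-dimensional instanton moduli -- so no connection on $(S^4,\tilde g)$ is strictly stable, consistent with $\operatorname{YM}$ being conformally invariant in dimension four.

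The step I expect to be the main obstacle is precisely the gauge-orthogonalization: verifying that the correct power of $e^{\varphi}$ (or the correction $\omega_a-d^\nabla f_a$) makes the test fields $\delta^\nabla_{\tilde g}$-coclosed and, crucially, tracking how the weight derivatives reassemble after summation into the single clean bracket $\tfrac12\Delta\varphi-\tfrac{n+4}2|\operatorname{grad}\varphi|^2_g+2$ rather than into uncontrolled cross terms. The bookkeeping of these weight-derivative contributions, and confirming the sign of the collapsed coefficient, is where the real work lies.
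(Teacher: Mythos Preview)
Your plan matches the paper's argument closely. The paper also rewrites $\operatorname{YM}_{\tilde g}$ as the weighted functional $\tfrac12\int e^{(n-4)\varphi}|R^\nabla|_g^2\,dV_g$, takes the same conformal gradient fields $V_a$, and achieves gauge-orthogonality exactly by your first suggestion: rescaling $V\mapsto \tilde V=e^{\lambda\varphi}V$. A direct computation gives $\delta^{\tilde\nabla}(i_{\tilde V}R^\nabla)=-(\lambda+2)\,R^\nabla(\tilde V,\widetilde{\operatorname{grad}}\varphi)$, so the correct power is $\lambda=-2$; plugging this into the summed second-variation identity (the paper's Proposition~3.1) produces the coefficient shift you predicted.

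One point you should anticipate: after summation the integrand does \emph{not} collapse to a single multiple of $|R^\nabla|_g^2$. There is a residual term $\lambda(8-2n-\lambda)\,|i_{\operatorname{grad}\varphi}R^\nabla|_g^2=(4n-20)\,|i_{\operatorname{grad}\varphi}R^\nabla|_g^2$, and the paper absorbs it via the pointwise bound $|i_{\operatorname{grad}\varphi}R^\nabla|_g^2\le 2|\operatorname{grad}\varphi|_g^2\,|R^\nabla|_g^2$. This estimate is what turns the $|\operatorname{grad}\varphi|^2$ coefficient $\tfrac{(n-8)^2}{2}$ coming from the first bracket into $\tfrac{n^2-16}{2}=\tfrac{(n-4)(n+4)}{2}$, after which the global factor $(n-4)$ drops out and the hypothesis applies. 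Your Hodge-projection alternative would work in principle (gauge directions lie in the kernel of the Hessian at a critical point, so $\mathscr{L}(\omega_a-d^\nabla f_a)=\mathscr{L}(\omega_a)$), but the rescaling is cleaner and is what the paper does.
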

	We further analyze Yang-Mills connections on a finite product of spheres, and derived the following result.
	\begin{theorem}\label{thm:S^n times S^m}
		Assume $M=S^{n_1}\times...\times S^{n_q}$ is a product space of finite standard spheres with the product metric $g_M=g_{S^{n_1}}\oplus...\oplus g_{S^{n_q}}$.\\
		(1) If $n_i\ge5\ (1\le i\le q)$, then there is no nontrivial weakly stable Yang-Mills connections on $M$.\\
		(2) If $n_i\ge4\ (1\le i\le q)$, then there is no nontrivial stable Yang-Mills connections on $M$.
	\end{theorem}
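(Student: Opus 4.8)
The plan is to adapt the averaging method of Bourguignon–Lawson \cite{BL} (following Lawson–Simons \cite{LS} and Ohnita–Pan \cite{OP}) to the product geometry. I would first realize $M=S^{n_1}\times\cdots\times S^{n_q}$ as a product of unit spheres inside $\mathbb R^{N}=\bigoplus_{i=1}^{q}\mathbb R^{n_i+1}$, $N=\sum_i(n_i+1)$, and take the test vector fields $V_\alpha=(e_\alpha)^{\top}$ obtained by projecting the constant coordinate fields $e_\alpha$ of $\mathbb R^N$ onto $TM$. Writing $e_\alpha=V_\alpha+h_\alpha\nu_i$ on the relevant factor, each $V_\alpha$ is tangent to a single sphere factor, and the second fundamental form of $M$ is block diagonal, $A(X,Y)=-\sum_i\langle X_i,Y_i\rangle\,\nu_i$, where $X_i$ denotes the component of $X$ in $TS^{n_i}$. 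The driving pointwise identities are, for each factor $i$, $\sum_{\alpha\in i}h_\alpha^2=1$ and $\sum_{\alpha\in i}\langle V_\alpha,X\rangle\langle V_\alpha,Y\rangle=\langle X_i,Y_i\rangle$.

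Next I would test the second variation with the $\mathrm{ad}(E)$-valued one-forms $\iota_{V_\alpha}R^\nabla$, i.e. $(\iota_{V_\alpha}R^\nabla)(X)=R^\nabla(V_\alpha,X)$, and sum over $\alpha$. Because $A$ is block diagonal, the cross-terms between distinct factors vanish and the averaged second variation splits as a sum over the $q$ factors. The heart of the argument — and the step I expect to be the main obstacle — is to show, via the Bourguignon–Lawson second variation formula together with the Yang–Mills equation $\delta^\nabla R^\nabla=0$ and the Bianchi identity $d^\nabla R^\nabla=0$ to cancel the derivative terms, that for an orthonormal frame $\{e_a\}$ adapted to the product splitting and $R^\nabla_{ab}=R^\nabla(e_a,e_b)$,
\[
\sum_{\alpha\in i}\frac{d^2}{dt^2}\operatorname{YM}\!\left(\nabla+t\,\iota_{V_\alpha}R^\nabla\right)\Big|_{t=0}
=\int_M\Big[(4-n_i)\!\!\sum_{a<b\,\in\,i}\!|R^\nabla_{ab}|^2+(2-n_i)\!\!\sum_{a\in i,\;c\notin i}\!|R^\nabla_{ac}|^2\Big]\,dV_{g_M}.
\]
The pure coefficient $(4-n_i)$ is exactly the single-sphere constant, while a mixed component $R^\nabla_{ac}$ with $a\in i$, $c\in j$ ($i\neq j$) is seen by both factors and therefore receives total coefficient $(2-n_i)+(2-n_j)=4-n_i-n_j$. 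The delicate point is to verify that only one leg of a mixed component interacts with the conformal test fields of a given factor, so that these coefficients come out precisely; this is where the block structure of $A$ and the trace identities above must be combined carefully.

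Granting this, the conclusion is immediate. For part (1), if every $n_i\ge5$ then $4-n_i\le-1<0$ and $4-n_i-n_j\le-6<0$, so after summing over all factors the total averaged second variation is strictly negative whenever $R^\nabla\not\equiv0$; this contradicts weak stability and forces $R^\nabla\equiv0$. For part (2), if every $n_i\ge4$ then $4-n_i\le0$ (with equality iff $n_i=4$) while $4-n_i-n_j\le-4<0$, so the total is nonpositive and can vanish only through pure components of four-dimensional factors. On such a factor the borderline directions correspond to self-dual or anti self-dual curvature, which I would treat exactly as in the $S^4$ case of Theorem \ref{thm 2}: passing to the strict notion of stability (positivity of the second variation in directions $\mathrm{L}^2$-orthogonal to the gauge orbit, Definition \ref{def:stable}) excludes these directions, so strict positivity must fail unless $R^\nabla\equiv0$. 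This yields the nonexistence of nontrivial stable Yang–Mills connections and completes the proof.
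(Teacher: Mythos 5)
Your proposal follows essentially the same route as the paper: the test forms $i_{V}R^\nabla$ built from the gradient conformal fields of each sphere factor are exactly the paper's choice, and your averaged identity --- pure coefficient $4-n_i$ on components with both legs in factor $i$, total coefficient $4-n_i-n_j$ on mixed components --- is precisely the paper's Proposition 4.2, which is obtained there via the Bochner--Weizenb\"ock formula together with $\delta^\nabla R^\nabla=0$ and $\sum_l f_{v^k_l}V^k_l=0$. One correction on part (2): no self-duality discussion is needed or relevant. What the argument actually requires is the computation $\delta^\nabla(i_{V^k}R^\nabla)=0$ (which the paper carries out, and which you omit), so that the test forms lie in $\ker(\delta^\nabla)$ and Definition \ref{def:stable} applies; then stability forces each individual term $\mathscr{L}(i_{V^k_l}R^\nabla)$ to be strictly positive unless $i_{V^k_l}R^\nabla=0$, and since the total is $\le 0$ when all $n_i\ge4$, every test form vanishes and $R^\nabla\equiv0$. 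Your appeal to self-dual/anti-self-dual borderline directions misreads Theorem \ref{thm 2}, whose $S^4$ case is proved by this same vanishing argument rather than by the Bourguignon--Lawson $S^4$ classification for special gauge groups.
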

	Furthermore, we give the stability of Yang-Mills connections on some warped product manifolds.
	\begin{theorem}\label{thm 3}
		Assume $M=I\times N$ is a non-compact manifold with warped product metric $g_M=drdr+f^2(r)g_N$, where $(N,g_N)$ is an $n-1$ dimensional compact Riemannian manifold, $I\subset\mathbb{R}$ is an open integral, $f\in C^2(I,(0,+\infty))$ satisfies
		\begin{equation}\label{condition for f}
			\begin{split}
				&(a)\ f>0,\\
				&(b)\ f(r)f''(r)\in L^\infty(I),\\
				&(c)\ f(r)(f'(r)+1)=O(r)\textrm{ for }r\in I\textrm{ and as }r\to\infty,\\
				&(d)\ f(r)(f'(r)+1)=O(|r-a|)\textrm{ for }r\to a\in\partial I.
			\end{split}
		\end{equation}
		If $(n-4)f''<0$ and $\nabla$ is a weakly stable Yang-Mills connection on $M$ with $R^\nabla\in L^\infty(M)\cap L^2(M)$, then we have
		\begin{align*}
			i_{\frac\partial{\partial r}}R^\nabla=0.
		\end{align*}
	\end{theorem}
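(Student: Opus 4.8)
\noindent\emph{Proposed approach.}\quad
The plan is to adapt the Bourguignon--Lawson instability method \cite{BL} to this non-compact warped product by using the distinguished geodesic field $\partial_r:=\partial/\partial r$ to manufacture a test variation and then extracting a sign from the second variation (index form) of $\operatorname{YM}$. At a Yang--Mills connection $\nabla$ the Hessian of $\operatorname{YM}$ in the direction of a $1$-form $B$ valued in the adjoint bundle is
\begin{align*}
\mathcal{I}(B,B)=\int_M\Big(|d^\nabla B|^2+\langle R^\nabla,[B\wedge B]\rangle\Big)\,dV_{g_M},
\end{align*}
and weak stability means $\mathcal{I}(B,B)\ge0$ for every such $B$. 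The structurally natural test field is the conformal vector field $f\,\partial_r$: by the warped-product Koszul formula $\nabla_{\partial_r}\partial_r=0$ and $\nabla_X\partial_r=(f'/f)X$ for $X$ tangent to $N$, so $\mathcal{L}_{f\partial_r}g_M=2f'\,g_M$. Because $M$ is non-compact I would damp this field with a cutoff and take
\begin{align*}
B=\eta(r)\,i_{\partial_r}R^\nabla,
\end{align*}
where $\eta$ is compactly supported with $\eta\uparrow1$; note that $i_{\partial_r}R^\nabla$ is exactly the quantity the theorem asserts to vanish.

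The core is the index-form computation. Using the Bianchi identity $d^\nabla R^\nabla=0$ and the Yang--Mills equation $\delta^\nabla R^\nabla=0$, I would rewrite $d^\nabla(i_{\partial_r}R^\nabla)$ as the covariant Lie derivative $\mathcal{L}^\nabla_{\partial_r}R^\nabla$ and push all of the geometry into the deformation tensor of $\partial_r$ and the curvature of $(M,g_M)$ via a Weitzenb\"ock manipulation. In a warped product the only non-flat radial sectional curvatures equal $-f''/f$, and these are precisely the terms that survive the contraction against $i_{\partial_r}R^\nabla$, the tangential and cross contributions being arranged to cancel. Separating the cutoff pieces, I expect the compactly supported computation to collapse to
\begin{align*}
\mathcal{I}(B,B)=(n-4)\int_M\frac{f''}{f}\,\eta^2\,|i_{\partial_r}R^\nabla|^2\,dV_{g_M}+\mathcal{E}(\eta),
\end{align*}
where $\mathcal{E}(\eta)$ gathers every term carrying a derivative $\eta'$. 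As a consistency check, the round sphere $S^n=I\times_{\sin r}S^{n-1}$ has $f''=-f$, so the coefficient reduces to the Bourguignon--Lawson constant $4-n$.

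It then remains to remove the cutoff. I would choose a logarithmic family $\eta_j\uparrow1$ and bound $\mathcal{E}(\eta_j)$ using $|i_{\partial_r}R^\nabla|\le|R^\nabla|\in L^\infty(M)\cap L^2(M)$: since $\eta_j'$ is supported on annuli escaping to $\partial I$ and to infinity, the growth hypotheses (c) $f(f'+1)=O(r)$ as $r\to\infty$ and (d) $f(f'+1)=O(|r-a|)$ as $r\to a\in\partial I$ are exactly what force $\mathcal{E}(\eta_j)\to0$, while (b) $ff''\in L^\infty(I)$ together with $R^\nabla\in L^2(M)$ makes the main integral absolutely convergent and stable under the limit. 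Weak stability gives $\mathcal{I}(B_j,B_j)\ge0$ for each $j$, whereas $(n-4)f''<0$ (which forces $n\neq4$ and $f''$ nowhere zero, so $f>0$ gives a fixed-sign integrand) makes the main term non-positive. Letting $j\to\infty$ squeezes the main integral to $0$, whence $(n-4)\frac{f''}{f}|i_{\partial_r}R^\nabla|^2\equiv0$ and therefore $i_{\partial_r}R^\nabla=0$.

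The main obstacle is the second step: reorganizing $|d^\nabla(i_{\partial_r}R^\nabla)|^2+\langle R^\nabla,[i_{\partial_r}R^\nabla\wedge i_{\partial_r}R^\nabla]\rangle$ so that, once the two field equations and the warped-product connection coefficients are inserted, everything collapses onto the single radial-curvature coefficient $(n-4)f''$; one must check carefully that the purely tangential ($N$-direction) curvature terms and the mixed terms really do cancel and do not leave a spurious coefficient. A secondary, purely analytic difficulty is to match the growth conditions (a)--(d) to an explicit cutoff so that each error in $\mathcal{E}(\eta_j)$ is dominated and vanishes in the limit; here the simultaneous $L^\infty$ and $L^2$ control of $R^\nabla$ is what lets the borderline logarithmic estimate close.
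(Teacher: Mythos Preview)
Your proposal is correct and follows essentially the same route as the paper. The paper packages the test variation as $i_VR^\nabla$ with $V=f\partial_r$ (so that $D_XV=f'X$ for \emph{all} $X$, giving the clean pointwise identity $\delta^\nabla d^\nabla i_VR^\nabla+\mathfrak{R}^\nabla_g(i_VR^\nabla)=(n-4)f^{-1}f''\,i_VR^\nabla$), then multiplies by a cutoff $\eta_R$ and shows the boundary terms vanish using (b)--(d) and $R^\nabla\in L^2$; your choice $B=\eta\,i_{\partial_r}R^\nabla$ is the same object up to absorbing $f$ into the cutoff, and your predicted main term $(n-4)\int\frac{f''}{f}\eta^2|i_{\partial_r}R^\nabla|^2$ is exactly what the paper's identity yields after this rescaling.
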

	As a corollary, we have the following proposition.
	\begin{corollary}
		Assume $M$ is an ellipsoid defined by $\{(x_1,...,x_{n+1})\in\mathbb{R}^{n+1}\mid\sum_{i=1}^{n+1}a_ix_i^2=1\}$ for some positive constant $a_i$. If $n\ge5$, then there is no nontrivial weakly stable Yang-Mills connections on $M$. Moreover, there is no nontrivial stable Yang-Mills connections on $M$ if $n\ge4$.
	\end{corollary}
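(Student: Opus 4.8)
The plan is to treat the ellipsoid $M=\{\sum_{i=1}^{n+1}a_ix_i^2=1\}$ directly as a compact hypersurface of $\mathbb{R}^{n+1}$ and to run the Bourguignon--Lawson second--variation argument — the computation that also underlies Theorem \ref{thm 1} — rather than trying to realize $M$ as a conformal sphere or as a warped product. This is forced upon us: a generic ellipsoid ($n\ge4$, distinct $a_i$) has $n$ distinct principal curvatures, hence is not quasi--umbilical and so \emph{not} conformally flat, hence not conformally diffeomorphic to the round $S^n$; thus Theorem \ref{thm 1} does not apply in general (and even for the rotationally symmetric ellipsoids its analytic condition on $\varphi$ fails once the eccentricity is large). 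Likewise a general ellipsoid carries no rotational symmetry and is not a warped product, so Theorem \ref{thm 3} does not apply either. What it does have, being a closed convex quadric, is a definite shape operator $S$ at every point, i.e. principal curvatures $\kappa_1,\dots,\kappa_n$ all of one sign. Since $M$ is compact, any Yang--Mills connection has $R^\nabla$ smooth, so the integrability conditions $R^\nabla\in L^2\cap L^\infty$ are automatic.

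The test variations are built from the ambient frame. Let $\{E_\alpha\}_{\alpha=1}^{n+1}$ be the standard basis of $\mathbb{R}^{n+1}$, let $\nu$ be the unit normal, and set $V_\alpha=E_\alpha-\langle E_\alpha,\nu\rangle\nu$, the tangential projection, and $g$ the induced metric. These satisfy the tight--frame identities $\sum_\alpha V_\alpha\otimes V_\alpha=\mathrm{Id}_{TM}$, $\sum_\alpha\langle E_\alpha,\nu\rangle^2=1$ and $\sum_\alpha\langle E_\alpha,\nu\rangle V_\alpha=0$, together with $\nabla_X V_\alpha=\langle E_\alpha,\nu\rangle\,S(X)$, where $S$ is, up to sign, the shape operator, definite by convexity. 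I would use $\omega_\alpha:=i_{V_\alpha}R^\nabla\in\Omega^1(\mathfrak g_E)$ as the directions of variation. Weak stability gives $\sum_\alpha I(\omega_\alpha,\omega_\alpha)\ge0$, where $I$ is the Yang--Mills index form. Plugging in and using the Yang--Mills equation $\delta^\nabla R^\nabla=0$, the Bianchi identity $d^\nabla R^\nabla=0$, and collapsing the $\alpha$--sum via the tight--frame identities, one reduces the whole expression to a single curvature integral $\sum_\alpha I(\omega_\alpha,\omega_\alpha)=\int_M\langle\mathcal Q_S(R^\nabla),R^\nabla\rangle\,dV_g$, where $\mathcal Q_S$ is a symmetric endomorphism of $\Lambda^2T^*M\otimes\mathfrak g_E$ built algebraically from $S$ and the dimension $n$.

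It then remains to show $\mathcal Q_S$ is negative, which forces $R^\nabla\equiv0$. Diagonalizing in a principal frame $Se_i=\kappa_ie_i$, the operator $\mathcal Q_S$ acts on the component $R^\nabla(e_i,e_j)$ with an eigenvalue $\lambda_{ij}$ that is an explicit expression in $\kappa_1,\dots,\kappa_n$ and $n$; for the round sphere ($\kappa_i\equiv1$) it specializes to $\lambda_{ij}=4-n$, recovering Bourguignon--Lawson and the value $4-n$ for the smallest Jacobi eigenvalue recorded in the introduction. The crux of the corollary is the purely algebraic inequality $\lambda_{ij}<0$ for $n\ge5$ under the sole constraint that the $\kappa_i$ share one sign — that is, that \emph{convexity alone}, with no pinching and no bound on the eccentricity, already pushes every eigenvalue to the negative side once $n\ge5$. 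This is exactly where the lack of symmetry of a general ellipsoid is absorbed, and I expect it to be the main obstacle: unlike the round case the $\kappa_i$ are unequal and vary over $M$, so one must either establish $\lambda_{ij}<0$ pointwise for all admissible principal curvatures, or — should pointwise negativity degenerate for very eccentric shapes — integrate first and exploit the divergence and Minkowski integral identities available for a closed convex hypersurface to make $\int_M\langle\mathcal Q_S(R^\nabla),R^\nabla\rangle\,dV_g$ negative in the aggregate.

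For the stable statement with $n\ge4$ I would repeat the computation with the gauge--orthogonal index form of Definition \ref{def:stable} (equivalently \cite{BL} (6.12)), restricting to variations orthogonal to the gauge orbit, i.e. lying in $\ker\delta^\nabla$ after projection. As in the passage from Theorem \ref{thm 1} to Theorem \ref{thm 2}, removing the longitudinal (gauge) part improves the borderline dimension by one: the relevant eigenvalue acquires the overall factor $(n-4)$ familiar from Theorem \ref{thm 2}, so $\lambda_{ij}\le0$ persists down to $n=4$; stability then forces equality, and the equality analysis carried out in the proof of Theorem \ref{thm 2} yields $R^\nabla\equiv0$. Convexity of $M$ at every point is what keeps the estimate uniform over $M$, so the conclusion holds for every choice of positive constants $a_i$.
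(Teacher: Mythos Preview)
Your diagnosis of the paper's own argument is accurate: the corollary is placed as a consequence of Theorem~\ref{thm 3}, with the preceding remark indicating that one writes the manifold (minus two poles) as a warped product $I\times N$ and then lets the radial direction sweep out $TM$. As you observe, this works for the round sphere because any pair of antipodal points can serve as poles, but a generic ellipsoid with pairwise distinct $a_i$ is not a warped product about any axis, nor is it conformally flat, so neither Theorem~\ref{thm 1} nor Theorem~\ref{thm 3} applies directly. Your proposal to run the Bourguignon--Lawson hypersurface computation with the tangential projections $V_\alpha=E_\alpha^\top$ is therefore a genuinely different route, and the only one among those in the paper that can reach the general ellipsoid; the identities you list ($D_XV_\alpha=\langle E_\alpha,\nu\rangle\,S(X)$, the tight--frame relations) are correct and the reduction of $\sum_\alpha\mathscr L(i_{V_\alpha}R^\nabla)$ to a pointwise operator $\mathcal Q_S$ on $\Lambda^2$ is the right shape.

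The gap is exactly where you flag it, and it is real. Once you carry out the trace as in Proposition~\ref{sum L(B_V_k)} with $D_XV=-f_vX$ replaced by $D_XV_\alpha=h_\alpha S(X)$, the coefficient in front of $|R^\nabla(e_i,e_j)|^2$ is a quadratic expression in the principal curvatures (plus derivative terms in $S$) that does \emph{not} have a sign from convexity alone: if two of the $\kappa_k$ are much larger than the remaining $n-2$, the algebraic part of $\lambda_{ij}$ can become positive, so pointwise negativity fails and you are forced into the integral argument you allude to---but you name no candidate identity, and none is obvious. For the $n\ge4$ stable clause your reasoning is looser still: the overall factor $(n-4)$ in Theorem~\ref{thm 2} comes from the specific choice $\lambda=-2$ in the conformal--sphere computation (which is what lands $i_{\tilde V}R^\nabla$ in $\ker\delta^{\tilde\nabla}$), not from an abstract gauge projection, and there is no reason that mechanism transfers to the hypersurface setting. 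So the plan is sound and more honest about the general ellipsoid than the paper's one--line corollary, but the decisive inequality is asserted rather than proved.
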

	\section{Preliminary}
	\subsection{The connections and curvatures on vector bundles}\ 
	
	Assume $(M,g)$ be a $n$ dimensional compact Riemannian manifold and $\mathscr{X}(M)$ be the collection of vector fields on $M$. Let $E\to M$ be a rank $r$ vector bundle over $M$ with a compact Lie group $G$ as its structure group. We also assume $\langle\ ,\ \rangle$ be the Riemannian metric on $E$ compatible with the action of $G$ and $\mathfrak{g}_E$ be the adjoint bundle of $E$. Let $\nabla:\Omega^0(\mathfrak{g}_E)\to\Omega^1(\mathfrak{g}_E)$ be the connection on $E$ compatible with the metric $\langle\ ,\ \rangle$. Locally, $\nabla$ takes the form
	\begin{align*}
		\nabla=d+A,
	\end{align*}
	where $A\in\Omega^1(\mathfrak{g}_E)$.
	
	For any connection $\nabla$ on $E$, the corresponding curvature $R^\nabla$ is given by
	\begin{align*}
		R^\nabla=dA+\frac12[A\wedge A],
	\end{align*}
	where
	\begin{align*}
		\frac12[A\wedge A](X,Y)=[A(X),A(Y)].
	\end{align*}
	
	The induced inner product on $\Omega^p(\mathfrak{g}_E)$ is given by
	\begin{align*}
		\langle\phi,\psi\rangle=\frac12Tr(\phi^T\psi)
	\end{align*}
	for any $\phi,\psi\in\Omega^0(\mathfrak{g}_E)$ and
	\begin{align*}
		\langle\phi,\psi\rangle_g=\frac1{p!}\sum_{1\le i_1,...,i_p\le p}\langle\phi(e_{i_1},...,e_{i_p}),\psi(e_{i_1},...,e_{i_p})\rangle,
	\end{align*}
	for any $\phi,\psi\in\Omega^p(\mathfrak{g}_E)$, where $\{e_i\mid1\le i\le n\}$ is an orthogonal basis of $TS^n$ respect to the metric $g$. After integrating, we get the global inner product of $\Omega^p(\mathfrak{g}_E)$, that is
	\begin{align*}
		(\phi,\psi)_g=\int_{S^n}\langle\phi,\psi\rangle_g dV_g.
	\end{align*}
	The connection $\nabla$ induces a connection $d^\nabla:\Omega^p(\mathfrak{g}_E)\to\Omega^{p+1}(\mathfrak{g}_E)$ on $\Omega^p(\mathfrak{g}_E)$, and we assume $\delta^\nabla:\Omega^p(\mathfrak{g}_E)\to\Omega^{p-1}(\mathfrak{g}_E)$ be the formal adjoint operator of $d^\nabla$. On local coordinates, we have
	\begin{align*}
		d^\nabla\phi(X_1,...,X_{p+1})&=\sum_{i=1}^{p+1}(-1)^{i+1}\nabla_{X_i}\phi(X_1,...,\hat{X_i},...,X_{p+1}),\\
		\delta^\nabla\phi(X_1,...,X_{p-1})&=-\sum_{i=1}^n\nabla_{e_i}\phi(e_i,X_1,...,X_{p-1})
	\end{align*}
	for any $\phi\in\Omega^p(\mathfrak{g}_E)$. 
	
	We can define the Laplace–Beltrami operator $\Delta^\nabla$ by
	\begin{align*}
		\Delta^\nabla\phi=d^\nabla\delta^\nabla\phi+\delta^\nabla d^\nabla\phi
	\end{align*}
	and the rough Laplacian operator $\nabla^\ast\nabla$ by
	\begin{align*}
		\nabla^\ast\nabla\phi=-\sum_{i=1}^n(\nabla_{e_i}\nabla_{e_i}\phi-\nabla_{D_{e_i}e_i}\phi).
	\end{align*}
	For $\phi\in\Omega^1(\mathfrak{g}_E)$ and $\psi\in\Omega^2(\mathfrak{g}_E)$, define
	\begin{align*}
		\mathfrak{R}_g^\nabla(\phi)(X)&=\sum_{i=1}^n[R^\nabla(e_i,X),\phi(e_i)],\\
		\mathfrak{R}_g^\nabla(\psi)(X,Y)&=\sum_{i=1}^n[R^\nabla(e_i,X),\psi(e_i,Y)]-[R^\nabla(e_i,Y),\psi(e_i,X)].
	\end{align*}
	Then we have the following Bochner–Weizenböck formula first introduced by Bourguignon-Lawson.
	\begin{theorem}\cite{BL}
		For any $\phi\in\Omega^1(\mathfrak{g}_E)$ and $\psi\in\Omega^2(\mathfrak{g}_E)$, we have
		\begin{align*}
			&\Delta^\nabla\phi=\nabla^\ast\nabla+\phi\circ\operatorname{Ric}+\mathfrak{R}_g^\nabla(\phi),\\
			&\Delta^\nabla\psi=\nabla^\ast\nabla+\psi\circ(\operatorname{Ric}\wedge Id+2R_M)+\mathfrak{R}_g^\nabla(\psi),
		\end{align*}
		where $R_M$ is the curvature tensor of $M$ and
		\begin{itemize}
			\item $\operatorname{Ric} :TM\to TM$ is the Ricci transformation defined by
			\[\operatorname{Ric}\left( X \right) =\sum_jR_M(X,e_j)e_j ,\]
			\item $ \operatorname{Ric}\wedge \operatorname{Id} $ is the extension of the Ricci transformation $\operatorname{Ric}$ to $\wedge^2 TM$ given by
			\[ \operatorname{Ric}\wedge \operatorname{Id}(X,Y)=\operatorname{Ric}\wedge \operatorname{Id}(X\wedge Y)=\operatorname{Ric}(X)\wedge Y+X\wedge \operatorname{Ric}(Y),\]
			\item The composite map $\psi\circ R_M :\wedge^2 TM \to \Omega^0 \left( \mathfrak{g}_E \right)  $ is defined by
			\[\psi\circ R_M(X,Y)=\psi\circ R_M(X\wedge Y )  =\frac{1}{2}\sum_{j=1}^{n} \psi( e_j, R_M(X,Y)e_j) .\]
		\end{itemize}
	\end{theorem}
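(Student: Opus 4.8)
The plan is to verify both Weitzenböck identities by a pointwise computation in an adapted frame, reducing each to the Ricci commutation rule for the tensor product connection on $T^*M\otimes\mathfrak{g}_E$ (resp. $\wedge^2T^*M\otimes\mathfrak{g}_E$). Fix a point $x_0\in M$ and choose a local orthonormal frame $\{e_i\}$ that is geodesic at $x_0$, i.e.\ $D_{e_i}e_j(x_0)=0$. With this choice the terms $\nabla_{D_{e_i}e_i}$ drop out at $x_0$, so the rough Laplacian reduces to $\nabla^*\nabla=-\sum_i\nabla_{e_i}\nabla_{e_i}$ there, and the local formulas for $d^\nabla$ and $\delta^\nabla$ quoted above lose all their Christoffel contributions at $x_0$. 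Since $x_0$ is arbitrary and both sides of each identity are tensorial, it suffices to match the two sides at $x_0$.

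For $\phi\in\Omega^1(\mathfrak{g}_E)$ I would first expand, at $x_0$,
\begin{align*}
	(d^\nabla\delta^\nabla\phi)(e_k) &= -\sum_i (\nabla_{e_k}\nabla_{e_i}\phi)(e_i),\\
	(\delta^\nabla d^\nabla\phi)(e_k) &= -\sum_i (\nabla_{e_i}\nabla_{e_i}\phi)(e_k) + \sum_i (\nabla_{e_i}\nabla_{e_k}\phi)(e_i),
\end{align*}
using the stated local formulas together with the vanishing of the frame derivatives. The middle term of the second line is precisely $(\nabla^*\nabla\phi)(e_k)$, while the remaining pieces combine into the contracted commutator $\sum_i([\nabla_{e_i},\nabla_{e_k}]\phi)(e_i)$. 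The Ricci identity then splits this commutator into a base-curvature part acting on the form index and a bundle-curvature part acting on the $\mathfrak{g}_E$-fiber through the adjoint action,
\begin{align*}
	([\nabla_{e_i},\nabla_{e_k}]\phi)(e_i) = -\phi\big(R_M(e_i,e_k)e_i\big) + [R^\nabla(e_i,e_k),\phi(e_i)].
\end{align*}
Summing over $i$ and using $\sum_i R_M(e_i,e_k)e_i=-\operatorname{Ric}(e_k)$ turns the first term into $(\phi\circ\operatorname{Ric})(e_k)$, while the second is $\mathfrak{R}_g^\nabla(\phi)(e_k)$ by definition. This yields the first formula.

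The second formula follows the same template with $\psi\in\Omega^2(\mathfrak{g}_E)$, but the bookkeeping of the base-curvature term is where the real work lies. After the analogous expansion, the leftover terms assemble into a contracted commutator $[\nabla_{e_i},\nabla_{e_j}]\psi$ evaluated on the frame, and the Ricci identity again produces a bundle piece equal to $\mathfrak{R}_g^\nabla(\psi)$ together with a Riemannian piece in which $R_M$ is contracted against \emph{both} slots of $\psi$. The main obstacle is to reorganize this Riemannian piece — a sum of terms of the form $\psi(R_M(\cdot,\cdot)\cdot,\cdot)$ and $\psi(\cdot,R_M(\cdot,\cdot)\cdot)$ — into the single operator $\psi\circ(\operatorname{Ric}\wedge\operatorname{Id}+2R_M)$. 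This requires the symmetries of the curvature tensor (pair symmetry and the first Bianchi identity) to separate the two Ricci-type contractions that build $\operatorname{Ric}\wedge\operatorname{Id}$ from the genuine curvature-operator contraction giving $2R_M$, matching the definitions of $\operatorname{Ric}\wedge\operatorname{Id}$ and $\psi\circ R_M$ recorded in the statement. The $1$-form computation is essentially mechanical once the commutator is in hand; the $2$-form case is the step demanding care.
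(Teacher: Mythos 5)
The paper does not prove this theorem; it is quoted from Bourguignon--Lawson \cite{BL}, and your computation is precisely the standard argument given there, with signs consistent with the paper's conventions for $\operatorname{Ric}$ and $\mathfrak{R}_g^\nabla$. The only step you leave unexecuted --- recombining the base-curvature terms in the $2$-form case --- does close exactly as you predict: the first Bianchi identity gives $-R_M(e_i,X)Y+R_M(e_i,Y)X=R_M(X,Y)e_i$, so the leftover sum is $\sum_i\psi(e_i,R_M(X,Y)e_i)=2\,\psi\circ R_M(X,Y)$, while the two Ricci-type contractions assemble into $\psi\circ(\operatorname{Ric}\wedge\operatorname{Id})$.
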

	\begin{remark}
		In particular, on the standard sphere $(S^n,g)$, we have
		\begin{align*}
			&\phi\circ\operatorname{Ric}=(n-1)\phi,\\
			&\psi\circ(\operatorname{Ric}\wedge Id+2R_{S^n})=2(n-2)\psi.
		\end{align*}
	\end{remark}
	\subsection{The conformal metric and the second variation}\ 
	
	Let $g$ be the standard metric on $S^n$, and assume $\tilde g=e^{2\varphi}g$ be the Riemannian metric on $S^n$ conformal to $g$ for some smooth map $\varphi\in C^\infty(S^n)$. Assume $D$ and $\tilde D$ be the Levi-Civita connections corresponding to $g$ and $\tilde g$ respectively. Then we have
	\begin{align*}
		\tilde D_XY=D_XY+X(\varphi)Y+Y(\varphi)X-g(X,Y)\operatorname{grad}(\varphi)
	\end{align*}
	for any $X,Y\in\mathscr{X}(S^n)$. For any connection $\nabla$ on $E$, assume $\nabla$ and $\widetilde\nabla$ are the induced connections on $\Omega^p(\mathfrak{g}_E)$ respect to $g$ and $\tilde g$ respectively, we have $R^{\tilde\nabla}=R^\nabla$ and
	\begin{align*}
		&d^{\tilde\nabla}\psi=d^\nabla\psi,\\
		&\delta^{\tilde\nabla}\psi=e^{-2\varphi}(\delta^\nabla\psi+(2p-n)i_{\operatorname{grad}(\varphi)}\psi)
	\end{align*}
	for any $\psi\in\Omega^p(\mathfrak{g}_E)$.
	
	The Yang-Mills functional on $(S^n,\tilde g)$ is given by
	\begin{align}\label{YM functional}
		\operatorname{YM}(\nabla)=\frac12\int_{S^n}|R^\nabla|^2_{\tilde g}dV_{\tilde g}=\frac12\int_{S^n}e^{(n-4)\varphi}|R^\nabla|^2_gdV_g.
	\end{align}
	The Euler-Lagrange equation is
	\begin{align}\label{YM in tilde g}
		\delta^{\tilde\nabla}R^\nabla=0,
	\end{align}
	or equivalently,
	\begin{align}\label{YM in g}
		\delta^\nabla R^\nabla=(n-4)i_{\operatorname{grad}(\varphi)}R^\nabla.
	\end{align}
	Assume $\nabla$ is a Yang-Mills connection satisfying (\ref{YM in g}). For any $B\in\Omega^1(\mathfrak{g}_E)$, let $\nabla^t=\nabla+tB$. Then we have
	\begin{align*}
		R^{\nabla^t}=R^\nabla+td^\nabla B+\frac12t^2[B\wedge B].
	\end{align*}
	The second variation of $\operatorname{YM}$ is given by
	\begin{align}\label{2nd var}
		\mathscr{L}(B):=\frac{d^2}{dt^2}\operatorname{YM}(\nabla^t)\mid_{t=0}=\int_{S^n}e^{(n-4)\varphi}\langle\mathscr{S}(B),B\rangle_gdV_g,
	\end{align}
	where
	\begin{align*}
		\mathscr{S}(B)=\delta^\nabla d^\nabla B-(n-4)i_{\operatorname{grad}(\varphi)}d^\nabla B+\mathfrak{R}_g^\nabla(B).
	\end{align*}
	For any gauge transformation $g\in\mathcal{G}$, $g$ acts on $\nabla$ such that $\nabla^g=g^{-1}\circ\nabla\circ g$. Then for any $\sigma\in\Omega^0(\mathfrak{g}_E)$, let $g_t=exp(t\sigma)$ be a family of gauge transformations. The variation along $\sigma$ is
	\begin{align*}
		\frac{d}{dt}\nabla^{g_t}\mid_{t=0}=d^\nabla\sigma.
	\end{align*}
	The Yang-Mills functional is invariant under gauge transformation. Therefore, we are interested in the variations orthogonal to gauge transformations respect to $\tilde g$, that is,
	\begin{align*}
		B\in\operatorname{Im}(d^\nabla)^\perp=\operatorname{ker}(\delta^{\tilde\nabla}).
	\end{align*}
	For any $B\in\operatorname{ker}(\delta^{\tilde\nabla})$, the second variation is given by
	\begin{align*}
		\mathscr{L}(B)=\int_{S^n}\langle\tilde{\mathscr{S}}(B),B\rangle_{\tilde g}dV_{\tilde g},
	\end{align*}
	where
	\begin{align*}
		\tilde{\mathscr{S}}(B)=\Delta^{\tilde\nabla}B+\mathfrak{R}^{\tilde\nabla}_{\tilde g}(B)
	\end{align*}
	is a elliptic, self-adjoint operator mapping $\operatorname{ker}(\delta^{\tilde\nabla})$ to itself. The eigenvalues of $\tilde{\mathscr{S}}$ on $\operatorname{ker}(\delta^{\tilde\nabla})$ are
	\begin{align*}
		\lambda_1<\lambda_2<...\to+\infty.
	\end{align*}
	Following (6.14) of \cite{BL}, we define a stable Yang-Mills connection.
	\begin{definition}\label{def:stable}
		We say a Yang-Mills connection is stable, if $\lambda_1>0$ and weakly stable if $\lambda_1\ge0$.
	\end{definition}
	It is readily verified that such defined weakly stable connections are equivalent to those with non-negative second variation.
	\section{The stability of Yang-Mills connections on conformal spheres}\ 
	
	In this section, we will prove Theorem \ref{thm 1} and Theorem \ref{thm 2}. As done in \cite{BL}, we construct the variational using gradient conformal vector fields of $(S^n,g)$ to prove the nonexistence of weakly stable Yang-Mills fields. These vector fields are precisely the gradients of the eigenfunctions corresponding to the first eigenvalue of the Laplace-Beltrami operator on $(S^n,g)$.
	\begin{proposition}\label{conformal vector field}
		For any vector $v\in\mathbb{R}^{n+1}$, let $F_v(x)=v\cdot x:\mathbb{R}^{n+1}\to\mathbb{R}$ be the linear map and $f_v=F_v\mid_{S^n}$ be the restriction on $S^n$. Then $V=\operatorname{grad}(f_v)$ is a conformal vector field of $(S^n,g)$ and satisfies
		\begin{align*}
			&D_XV=-f_vX,\\
			&D^\ast DV=V
		\end{align*}
		for any $X\in\mathscr{X}(S^n)$, where
		\begin{align*}
			D^\ast DV=-\sum_{i=1}^n(D_{e_i}D_{e_i}V-D_{D_{e_i}e_i}V).
		\end{align*}
	\end{proposition}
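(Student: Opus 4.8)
The plan is to work extrinsically, viewing $S^n\subset\mathbb{R}^{n+1}$ as the unit sphere with the induced metric and letting $\bar D$ denote the flat connection on $\mathbb{R}^{n+1}$. Since $F_v$ is linear, its Euclidean gradient is the constant vector $v$, so the intrinsic gradient of $f_v$ is the tangential projection of $v$: at a point $x\in S^n$, where the position vector $x$ is the unit outward normal, one has
\begin{align*}
	V=\operatorname{grad}(f_v)=v-(v\cdot x)x=v-f_v\,x .
\end{align*}
This explicit formula, which I would verify by checking $g(V,X)=v\cdot X=X(f_v)$ for every tangent $X$, is the starting point for everything that follows.

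To obtain the first identity I would differentiate the $\mathbb{R}^{n+1}$-valued map $x\mapsto V(x)$ along a curve $\gamma$ with $\gamma(0)=x$ and $\gamma'(0)=X$, and then apply the Gauss formula to extract the tangential part. A direct computation gives $\bar D_XV=-(v\cdot X)x-(v\cdot x)X$; using $X\perp x$ to rewrite $v\cdot X=g(V,X)$ and $v\cdot x=f_v$, this becomes $\bar D_XV=-g(V,X)\,x-f_vX$. The first term is normal and the second is tangent, so the Gauss formula yields $D_XV=-f_vX$. Symmetrizing in $X$ and $Y$ then gives
\begin{align*}
	(\mathcal{L}_Vg)(X,Y)=g(D_XV,Y)+g(X,D_YV)=-2f_v\,g(X,Y),
\end{align*}
which is exactly the statement that $V$ is a conformal vector field.

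The identity $D^\ast DV=V$ then follows by substituting $D_XV=-f_vX$ into the definition of $D^\ast D$ relative to an orthonormal frame $\{e_i\}$. Computing $D_{e_i}D_{e_i}V=-e_i(f_v)e_i-f_vD_{e_i}e_i$ and $D_{D_{e_i}e_i}V=-f_vD_{e_i}e_i$, the terms containing $D_{e_i}e_i$ cancel, leaving
\begin{align*}
	D^\ast DV=-\sum_{i=1}^n\bigl(D_{e_i}D_{e_i}V-D_{D_{e_i}e_i}V\bigr)=\sum_{i=1}^ne_i(f_v)e_i=\operatorname{grad}(f_v)=V .
\end{align*}

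Since each step is an elementary calculation, I do not expect a serious obstacle; the only point requiring care is the correct bookkeeping of the normal component when projecting with the Gauss formula, i.e.\ keeping track that $x$ is the unit normal and that $X\cdot x=0$ for $X$ tangent to $S^n$. As an alternative route to the last identity, one could instead note that $f_v$ is a first eigenfunction of the Laplacian on $S^n$ and apply the Weitzenb\"ock formula to the $1$-form $df_v$, using $\operatorname{Ric}=(n-1)g$ to reach $\nabla^\ast\nabla\,df_v=df_v$; but the direct extrinsic computation above is cleaner and self-contained.
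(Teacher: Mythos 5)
Your proof is correct. The paper in fact states this proposition without proof (it is the standard fact about first spherical harmonics used already in Bourguignon--Lawson), so there is no argument to compare against; your extrinsic computation via $V = v - f_v x$, the Gauss formula, and the cancellation of the $D_{e_i}e_i$ terms is the standard way to supply the omitted details, and every step checks out.
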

	
	To avoid any ambiguity, we designate the vector field corresponding to the vector $v$ as $V$. For some constant $\lambda\in\mathbb{R}$, let $\tilde V=e^{\lambda\varphi}V$, then we have
	\begin{align*}
		D_X\tilde V=-e^{\lambda\varphi}f_vX+\lambda X(\varphi)\tilde V.
	\end{align*}
	Inspired by \cite{BL}, for any $v\in\mathbb{R}^{n+1}$, we choose the variation of connection to be $B_v=i_{\tilde V}R^\nabla$. We need to compute the second variation that contains no derivatives of $\tilde V$.
	\begin{lemma}\label{S(iV_R)}
		For any $v\in\mathbb{R}^{n+1}$ and $X\in\mathscr{X}(S^n)$, we have
		\begin{align*}
			\mathscr{S}(B_v)(X)&=(4-n+\lambda\Delta\varphi-(\lambda^2+(n-4)\lambda)|\operatorname{grad}(\varphi)|_g^2)R^\nabla(\tilde V,X)+(4-n+3\lambda)e^{\lambda\varphi}f_vR^\nabla(\operatorname{grad}(\varphi),X)\\
			&-\lambda^2X(\varphi)R^\nabla(\operatorname{grad}(\varphi),\tilde V)-\lambda\nabla_{\operatorname{grad}(\varphi)}R^\nabla(\tilde V,X)-\lambda\nabla_{\tilde V}R^\nabla(\operatorname{grad}(\varphi),X)\\
			&+(n-4)R^\nabla(D_{\tilde V}\operatorname{grad}(\varphi),X)+\lambda R^\nabla(\tilde V,D_X\operatorname{grad}(\varphi)).
		\end{align*}
	\end{lemma}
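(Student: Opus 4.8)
The plan is to expand $\mathscr{S}(B_v) = \delta^\nabla d^\nabla B_v - (n-4)i_{\operatorname{grad}(\varphi)}d^\nabla B_v + \mathfrak{R}_g^\nabla(B_v)$ operator by operator, and to systematically remove every derivative of $\tilde V$ and every occurrence of $\delta^\nabla R^\nabla$ until only curvature evaluations $R^\nabla(\cdots,\cdots)$ remain. The tools are the second Bianchi identity $d^\nabla R^\nabla = 0$, the Yang-Mills equation \eqref{YM in g} in the form $\sum_i(\nabla_{e_i}R^\nabla)(e_i, X) = -(n-4)R^\nabla(\operatorname{grad}(\varphi), X)$, the Ricci commutation identity for second covariant derivatives, the structural formula $D_X\tilde V = -e^{\lambda\varphi}f_v X + \lambda X(\varphi)\tilde V$, the identity $e_i(f_v) = g(V, e_i)$ coming from $V = \operatorname{grad}(f_v)$, and the explicit curvature tensor $R(X,Y)Z = g(Y,Z)X - g(X,Z)Y$ of the standard sphere.

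First I would compute $d^\nabla B_v$. Writing $(\nabla_X B_v)(Y) = (\nabla_X R^\nabla)(\tilde V, Y) + R^\nabla(D_X\tilde V, Y)$ and substituting the formula for $D_X\tilde V$, the antisymmetric combination $(\nabla_X B_v)(Y) - (\nabla_Y B_v)(X)$ has its two curvature-derivative terms merged by the second Bianchi identity applied to the triple $(X, \tilde V, Y)$ into a single term $(\nabla_{\tilde V}R^\nabla)(X,Y)$. This produces the milestone
\begin{align*}
d^\nabla B_v(X,Y) = (\nabla_{\tilde V}R^\nabla)(X,Y) - 2e^{\lambda\varphi}f_v R^\nabla(X,Y) + \lambda X(\varphi)R^\nabla(\tilde V,Y) - \lambda Y(\varphi)R^\nabla(\tilde V,X),
\end{align*}
from which the remaining two operators are immediate: $\mathfrak{R}_g^\nabla(B_v)(X) = \sum_i[R^\nabla(e_i,X), R^\nabla(\tilde V,e_i)]$ by definition, and $-(n-4)i_{\operatorname{grad}(\varphi)}d^\nabla B_v$ by contracting the milestone against $\operatorname{grad}(\varphi)$.

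The main obstacle is $\delta^\nabla d^\nabla B_v = -\sum_i(\nabla_{e_i}d^\nabla B_v)(e_i, \cdot)$, and within it the term $\delta^\nabla(\nabla_{\tilde V}R^\nabla)$. Here I would commute $\nabla_{e_i}$ past the directional derivative $\nabla_{\tilde V}$, splitting it into the piece where $\nabla_{e_i}$ falls on $\tilde V$ — which, after inserting $D_{e_i}\tilde V$ and applying the Yang-Mills equation, yields $-(n-4)e^{\lambda\varphi}f_v R^\nabla(\operatorname{grad}(\varphi),X)$ and $-\lambda(\nabla_{\tilde V}R^\nabla)(\operatorname{grad}(\varphi),X)$ — and the genuine second-derivative piece $-\sum_i(\nabla^2_{e_i,\tilde V}R^\nabla)(e_i,X)$. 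For the latter I would invoke the Ricci identity to swap $\nabla^2_{e_i,\tilde V}$ for $\nabla^2_{\tilde V,e_i}$; the bundle-curvature commutators $[R^\nabla(e_i,\tilde V),R^\nabla(e_i,X)]$ generated thereby cancel $\mathfrak{R}_g^\nabla(B_v)$ exactly (this cancellation is what forces the final answer to be linear in $R^\nabla$), while the base-curvature terms, evaluated with the sphere curvature above, contribute $(2-n)R^\nabla(\tilde V,X)$; the surviving $\nabla^2_{\tilde V,e_i}$ contracts into $\nabla_{\tilde V}(\delta^\nabla R^\nabla)$, which the Yang-Mills equation turns into $(n-4)\nabla_{\tilde V}(i_{\operatorname{grad}(\varphi)}R^\nabla)$, and differentiating this contraction releases the Hessian term $(n-4)R^\nabla(D_{\tilde V}\operatorname{grad}(\varphi),X)$ and the directional term $-\lambda\nabla_{\operatorname{grad}(\varphi)}R^\nabla(\tilde V,X)$ along with a $\Delta\varphi$ contribution. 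Applying $\delta^\nabla$ to the remaining two pieces of the milestone by the Leibniz rule, and using $e_i(f_v) = g(V,e_i)$ so that the derivative of the weight $e^{\lambda\varphi}f_v$ reproduces $R^\nabla(\tilde V,X)$, supplies the last $+2R^\nabla(\tilde V,X)$ (completing the constant $4-n$ together with the $(2-n)$ above), the remaining $e^{\lambda\varphi}f_v R^\nabla(\operatorname{grad}(\varphi),X)$ contributions, the Hessian term $\lambda R^\nabla(\tilde V,D_X\operatorname{grad}(\varphi))$, and the quadratic terms $-\lambda^2 X(\varphi)R^\nabla(\operatorname{grad}(\varphi),\tilde V)$ and $-(\lambda^2+(n-4)\lambda)|\operatorname{grad}(\varphi)|_g^2 R^\nabla(\tilde V,X)$.

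Finally I would collect all contributions by the type of $\varphi$-derivative they carry. The coefficient of $R^\nabla(\tilde V,X)$ assembles from the sphere-curvature constant, the weight-derivative constant, the divergence term $\lambda\Delta\varphi$, and the quadratic gradient terms; the coefficient of $e^{\lambda\varphi}f_v R^\nabla(\operatorname{grad}(\varphi),X)$ gathers to $4-n+3\lambda$; and the Hessian, mixed-gradient, and directional-derivative terms appear as displayed. I expect the delicate part to be the sign and coefficient bookkeeping in the third step — in particular verifying that the bundle-curvature commutators cancel $\mathfrak{R}_g^\nabla(B_v)$ and that the sphere-curvature and weight-derivative pieces combine to the single constant $4-n$ — after which the remaining terms organize themselves exactly into the seven-term expression of the lemma.
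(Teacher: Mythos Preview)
Your proposal is correct and follows essentially the same route as the paper's proof: both compute $d^\nabla B_v$ via the second Bianchi identity (arriving at your ``milestone'' expression, which is exactly the four-term expansion the paper reaches inside its first displayed computation), then apply $\delta^\nabla$ and handle the double derivative $\nabla_{e_i}\nabla_{\tilde V}R^\nabla$ by the Ricci commutation formula, using the sphere curvature for the base terms, cancelling the bundle commutators against $\mathfrak{R}_g^\nabla(B_v)$, and converting the surviving piece into $\nabla_{\tilde V}(\delta^\nabla R^\nabla)$ via the Yang-Mills equation \eqref{YM in g}. One small slip in your narrative: the term $-\lambda\nabla_{\operatorname{grad}(\varphi)}R^\nabla(\tilde V,X)$ and the $\lambda\Delta\varphi$ contribution do not come from expanding $(n-4)\nabla_{\tilde V}(i_{\operatorname{grad}(\varphi)}R^\nabla)$ but from applying $\delta^\nabla$ to the last two $\lambda$-weighted terms of your milestone; this is only a bookkeeping misattribution and does not affect the validity of the argument.
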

	\begin{proof}
		For any $x\in S^n$, let $\{e_i\mid1\le i\le n\}$ be the orthogonal basis of $TS^n$ respect to $g$ such that $De_i(x)=0$. For any $e_j$, we have
		\begin{align*}
			&\delta^\nabla d^\nabla i_{\tilde V}R^\nabla(e_j)\\
			=&-\sum_{i=1}^n\nabla_{e_i}d^\nabla i_{\tilde V}R^\nabla(e_i,e_j)\\
			=&-\sum_{i=1}^n\nabla_{e_i}(\nabla_{e_i}i_{\tilde V}R^\nabla(e_j)-\nabla_{e_j}i_{\tilde V}R^\nabla(e_i))\\
			=&-\sum_{i=1}^n\nabla_{e_i}(\nabla_{e_i}(R^\nabla({\tilde V},e_j))-R^\nabla({\tilde V},D_{e_i}e_j)-\nabla_{e_j}(R^\nabla({\tilde V},e_i))+R^\nabla({\tilde V},D_{e_j}e_i))\\
			=&-\sum_{i=1}^n\nabla_{e_i}(\nabla_{e_i}R^\nabla({\tilde V},e_j)+R^\nabla(D_{e_i}{\tilde V},e_j)-\nabla_{e_j}R^\nabla({\tilde V},e_i)-R^\nabla(D_{e_j}{\tilde V},e_i))\\
			=&-\sum_{i=1}^n\nabla_{e_i}(\nabla_{\tilde V}R^\nabla(e_i,e_j)-2e^{\lambda\varphi}f_vR^\nabla(e_i,e_j)+\lambda e_i(\varphi)R^\nabla(\tilde V,e_j)-\lambda e_j(\varphi)R^\nabla(\tilde V,e_i))\\
			=&-\sum_{i=1}^n\nabla_{e_i}\nabla_{\tilde V}R^\nabla(e_i,e_j)+(2+\lambda\Delta\varphi-\lambda^2|\operatorname{grad}(\varphi)|_g^2)R^\nabla({\tilde V},e_j)+(8-2n+3\lambda)f_ve^{\lambda\varphi}R^\nabla(\operatorname{grad}(\varphi),e_j)\\
			&+\lambda(n-4-\lambda)e_j(\varphi)R^\nabla(\operatorname{grad}(\varphi),{\tilde V})-\lambda\nabla_{\operatorname{grad}(\varphi)}R^\nabla({\tilde V},e_j)+\lambda R^\nabla({\tilde V},D_{e_j}\operatorname{grad}(\varphi))
		\end{align*}
		at $x$, where we use $\sum_ie_i(f_v)e_i=\operatorname{grad}(f_v)=V$ and $-\sum_i\nabla_{e_i}R^\nabla(e_i,e_j)=\delta^\nabla R^\nabla(e_j)=(n-4)R^\nabla(\operatorname{grad}(\varphi),e_j)$. Employing the commutation formula, we have
		\begin{align*}
			&-\sum_{i=1}^n\nabla_{e_i}\nabla_{\tilde V}R^\nabla(e_i,e_j)\\
			=&-\sum_{i=1}^n([R^\nabla(e_i,{\tilde V}),R^\nabla(e_i,e_j)]-R^\nabla(R_{S^n}(e_i,{\tilde V})e_i,e_j)-R^\nabla(e_i,R_{S^n}(e_i,{\tilde V})e_j)+\nabla_{\tilde V}\nabla_{e_i}R^\nabla(e_i,e_j)+\nabla_{[e_i,{\tilde V}]}R^\nabla(e_i,e_j))\\
			=&-\mathfrak{R}_g^\nabla(i_{\tilde V}R^\nabla)(e_j)+(2-n)R^\nabla({\tilde V},e_j)-\sum_{i=1}^n(\nabla_{\tilde V}\nabla_{e_i}R^\nabla(e_i,e_j)+\nabla_{[e_i,{\tilde V}]}R^\nabla(e_i,e_j))
		\end{align*}
		at $x$, where
		\begin{align*}
			-\sum_{i=1}^n\nabla_{\tilde V}\nabla_{e_i}R^\nabla(e_i,e_j)=\nabla_{\tilde V}(\delta^\nabla R^\nabla(e_j))=(n-4)(\nabla_{\tilde V}R^\nabla(\operatorname{grad}(\varphi),e_j)+R^\nabla(D_{\tilde V}\operatorname{grad}(\varphi),e_j))
		\end{align*}
		and
		\begin{align*}
			-\sum_{i=1}^n\nabla_{[e_i,{\tilde V}]}R^\nabla(e_i,e_j)&=-\sum_{i=1}^n\nabla_{D_{e_i}{\tilde V}}R^\nabla(e_i,e_j)=(4-n)e^{\lambda\varphi}f_vR^\nabla(\operatorname{grad}(\varphi),e_j)-\lambda\nabla_{\tilde V}R^\nabla(\operatorname{grad}(\varphi),e_j).
		\end{align*}
		By analogous derivation, we obtain
		\begin{align*}
			&(4-n)i_{\operatorname{grad}(\varphi)}d^\nabla i_{\tilde V}R^\nabla(e_j)\\
			=&(4-n)(\nabla_{\tilde V}R^\nabla(\operatorname{grad}(\varphi),e_j)-2e^{\lambda\varphi}f_vR^\nabla(\operatorname{grad}(\varphi),e_j)+\lambda|\operatorname{grad}(\varphi)|_g^2R^\nabla({\tilde V},e_j)+\lambda e_j(\varphi)R^\nabla(\operatorname{grad}(\varphi),{\tilde V})).
		\end{align*}
		Synthesizing the preceding calculations, we conclude the proof of the lemma.
	\end{proof}
	\begin{proposition}\label{sum L(B_V_k)}
		Assume $\{v_1,...,v_{n+1}\}$ be an orthogonal basis of $\mathbb{R}^{n+1}$, then we have
		\begin{align}
			\sum_{k=1}^{n+1}\mathscr{L}(B_{v_k})=\int_{S^n}&e^{(n-4+2\lambda)\varphi}((\frac{4-n}2\Delta\varphi+2(\lambda+\frac{n-4}2)^2|\operatorname{grad}(\varphi)|_g^2+8-2n)|R^\nabla|_g^2\\
			&+\lambda(8-2n-\lambda)|i_{\operatorname{grad}(\varphi)}R^\nabla|^2)dV_g.
		\end{align}
	\end{proposition}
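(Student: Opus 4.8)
The plan is to insert the pointwise formula of Lemma \ref{S(iV_R)} into the second–variation identity $\mathscr{L}(B_v)=\int_{S^n}e^{(n-4)\varphi}\langle\mathscr{S}(B_v),B_v\rangle_g\,dV_g$ with $B_v=i_{\tilde V}R^\nabla$, to evaluate the pointwise inner product by setting $X=e_j$ and pairing against $B_v(e_j)=R^\nabla(\tilde V,e_j)$, and then to sum over an orthonormal basis $\{v_1,\dots,v_{n+1}\}$ of $\mathbb{R}^{n+1}$. The summation is the engine of the argument: from $f_{v_k}=v_k\cdot x$ and $V_k=\operatorname{grad}(f_{v_k})=v_k-f_{v_k}x$ one obtains $\sum_k f_{v_k}^2=1$, $\sum_k f_{v_k}V_k=\tfrac12\operatorname{grad}(\sum_k f_{v_k}^2)=0$, and $\sum_k\langle V_k,X\rangle\langle V_k,Y\rangle=g(X,Y)$ for tangent $X,Y$; multiplying the last by $e^{2\lambda\varphi}$ gives the companion statements for $\tilde V_k=e^{\lambda\varphi}V_k$. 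First I would record the consequences $\sum_{k,j}|R^\nabla(\tilde V_k,e_j)|^2=2e^{2\lambda\varphi}|R^\nabla|_g^2$, $\sum_k|R^\nabla(\tilde V_k,\operatorname{grad}\varphi)|^2=e^{2\lambda\varphi}|i_{\operatorname{grad}\varphi}R^\nabla|^2$, and the collapsing identity $\sum_k\langle R^\nabla(\tilde V_k,Y),R^\nabla(\tilde V_k,Z)\rangle=e^{2\lambda\varphi}\sum_l\langle R^\nabla(e_l,Y),R^\nabla(e_l,Z)\rangle$.

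With these in hand the algebraic (non–derivative) terms of Lemma \ref{S(iV_R)} are immediate: the leading term produces its coefficient times $2e^{2\lambda\varphi}|R^\nabla|_g^2$; the term carrying $e^{\lambda\varphi}f_vR^\nabla(\operatorname{grad}\varphi,X)$ drops out because $\sum_kf_{v_k}V_k=0$; the term $-\lambda^2X(\varphi)R^\nabla(\operatorname{grad}\varphi,\tilde V)$ contracts, using $\sum_je_j(\varphi)R^\nabla(\tilde V,e_j)=R^\nabla(\tilde V,\operatorname{grad}\varphi)$, to $\lambda^2e^{2\lambda\varphi}|i_{\operatorname{grad}\varphi}R^\nabla|^2$; and the two terms $(n-4)R^\nabla(D_{\tilde V}\operatorname{grad}\varphi,X)$ and $\lambda R^\nabla(\tilde V,D_X\operatorname{grad}\varphi)$ collapse to $(n-4+\lambda)e^{2\lambda\varphi}\langle\operatorname{Hess}\varphi,S\rangle$, where $S_{ij}:=\sum_l\langle R^\nabla(e_i,e_l),R^\nabla(e_j,e_l)\rangle$ is the symmetric curvature square, for which $\operatorname{tr}S=2|R^\nabla|_g^2$ and $S(\operatorname{grad}\varphi,\operatorname{grad}\varphi)=|i_{\operatorname{grad}\varphi}R^\nabla|^2$.

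The substance lies in the two derivative terms $-\lambda\nabla_{\operatorname{grad}\varphi}R^\nabla(\tilde V,X)$ and $-\lambda\nabla_{\tilde V}R^\nabla(\operatorname{grad}\varphi,X)$. For the second I would apply the second Bianchi identity $d^\nabla R^\nabla=0$ to write $(\nabla_{\tilde V}R^\nabla)(\operatorname{grad}\varphi,e_j)=-(\nabla_{\operatorname{grad}\varphi}R^\nabla)(e_j,\tilde V)-(\nabla_{e_j}R^\nabla)(\tilde V,\operatorname{grad}\varphi)$; the first piece pairs exactly like the term $-\lambda\nabla_{\operatorname{grad}\varphi}R^\nabla(\tilde V,X)$, and after the basis collapse both reduce to $-\lambda\int e^{(n-4+2\lambda)\varphi}\langle\operatorname{grad}\varphi,\operatorname{grad}(|R^\nabla|_g^2)\rangle$ each. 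The genuinely new piece I would integrate by parts against $R^\nabla(\tilde V_k,e_j)$ in the $e_j$ direction, carrying the weight $e^{(n-4)\varphi}$: the divergence integrates to zero, and the product rule produces (i) a term in which $\sum_j(\nabla_{e_j}R^\nabla)(\tilde V_k,e_j)=\delta^\nabla R^\nabla(\tilde V_k)$ is replaced, via the Yang-Mills equation (\ref{YM in g}), by $(n-4)R^\nabla(\operatorname{grad}\varphi,\tilde V_k)$, (ii) two terms carrying $D_{e_j}\tilde V_k=\lambda e_j(\varphi)\tilde V_k-e^{\lambda\varphi}f_{v_k}e_j$, and (iii) one term carrying $D_{e_j}\operatorname{grad}\varphi$. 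The source term (i) cancels against the weight–derivative term $(n-4)e_j(\varphi)$ coming from $e^{(n-4)\varphi}$, the $f_{v_k}$ parts of (ii) die by $\sum_kf_{v_k}V_k=0$, and what is left (after multiplication by the overall $-\lambda$) is $-2\lambda^2e^{2\lambda\varphi}|i_{\operatorname{grad}\varphi}R^\nabla|^2$ together with an extra copy $-\lambda e^{2\lambda\varphi}\langle\operatorname{Hess}\varphi,S\rangle$.

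Finally I would eliminate the Hessian contraction $\langle\operatorname{Hess}\varphi,S\rangle$ entirely. Integrating $\int e^{(n-4+2\lambda)\varphi}\langle\operatorname{Hess}\varphi,S\rangle$ by parts and using the twice-contracted Bianchi/Yang-Mills identity $(\operatorname{div}S)_j=-(n-4)S(\operatorname{grad}\varphi,e_j)+\tfrac12 e_j(|R^\nabla|_g^2)$ converts it into a multiple of $\int e^{(n-4+2\lambda)\varphi}|i_{\operatorname{grad}\varphi}R^\nabla|^2$ plus $\int e^{(n-4+2\lambda)\varphi}\langle\operatorname{grad}\varphi,\operatorname{grad}(|R^\nabla|_g^2)\rangle$; the latter, integrated by parts once more, yields $\int e^{(n-4+2\lambda)\varphi}(\Delta\varphi-(n-4+2\lambda)|\operatorname{grad}\varphi|_g^2)|R^\nabla|_g^2$, where $\Delta=-\operatorname{div}\operatorname{grad}$ as forced by Lemma \ref{S(iV_R)}. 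Collecting the $|R^\nabla|_g^2$ and $|i_{\operatorname{grad}\varphi}R^\nabla|^2$ contributions and simplifying the coefficients then gives the stated identity. I expect the main obstacle to be precisely this bookkeeping: tracking the conformal weight $e^{(n-4+2\lambda)\varphi}$ through each integration by parts, keeping the sign convention $\Delta=-\operatorname{div}\operatorname{grad}$ consistent, and—above all—verifying that the Yang-Mills source contributions cancel (the identities for $\delta^\nabla R^\nabla$ and for $\operatorname{div}S$ are exactly where the Yang-Mills equation is indispensable), so that only the divergence-free remainder survives. The repeated appearance and final elimination of $\langle\operatorname{Hess}\varphi,S\rangle$ is the step most likely to hide a sign or factor error.
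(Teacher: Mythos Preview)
Your plan is correct and follows the same architecture as the paper's proof: insert Lemma~\ref{S(iV_R)}, pair with $R^\nabla(\tilde V,e_j)$, sum over an orthonormal basis of $\mathbb{R}^{n+1}$ using the collapse identities you list, and integrate by parts to eliminate $\langle\operatorname{Hess}\varphi,S\rangle$ in favor of $|i_{\operatorname{grad}\varphi}R^\nabla|^2$ and $\operatorname{grad}\varphi(|R^\nabla|_g^2)$. Your organizing tensor $S$ and your divergence identity $(\operatorname{div}S)_j=-(n-4)S(\operatorname{grad}\varphi,e_j)+\tfrac12 e_j(|R^\nabla|_g^2)$ are exactly what the paper computes, written more compactly.

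The one place where you take a longer detour than the paper is the treatment of $-\lambda\nabla_{\tilde V}R^\nabla(\operatorname{grad}\varphi,e_j)$. You apply Bianchi \emph{before} the basis collapse and then integrate the piece $(\nabla_{e_j}R^\nabla)(\tilde V,\operatorname{grad}\varphi)$ by parts in $e_j$, which produces Yang--Mills source terms, an extra $-2\lambda^2|i_{\operatorname{grad}\varphi}R^\nabla|^2$, and an additional copy of $-\lambda\langle\operatorname{Hess}\varphi,S\rangle$ that must later be reabsorbed. The paper instead collapses first and observes, via a one-line Bianchi symmetrization, that
\[
\sum_{i,j}\bigl\langle(\nabla_{e_i}R^\nabla)(\operatorname{grad}\varphi,e_j),R^\nabla(e_i,e_j)\bigr\rangle=\tfrac12\operatorname{grad}\varphi\bigl(|R^\nabla|_g^2\bigr),
\]
so \emph{both} derivative terms reduce pointwise to multiples of $\operatorname{grad}\varphi(|R^\nabla|_g^2)$ with no integration by parts and no appeal to the Yang--Mills equation at that stage. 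This bypasses your ``genuinely new piece'' entirely and confines the Yang--Mills equation to the single step where $\langle\operatorname{Hess}\varphi,S\rangle$ is eliminated. Your route reaches the same destination, but the paper's is shorter and less exposed to the bookkeeping risk you rightly flag.
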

	\begin{proof}
		According to Lemma \ref{S(iV_R)}, we have
		\begin{align*}
			\mathscr{L}(B_v)=(4-n)\int_{S^n}e^{(n-4)\varphi}q(v,v)dV_g,
		\end{align*}
		where at any $x\in S^n$, $q$ is a quadratic form on $\mathbb{R}^{n+1}$ defined by
		\begin{align*}
			q(v,w)=\sum_{j=1}^n&\langle (4-n+\lambda\Delta\varphi-(\lambda^2+(n-4)\lambda)|\operatorname{grad}(\varphi)|_g^2)R^\nabla(\tilde V,e_j)+(4-n+3\lambda)e^{\lambda\varphi}f_vR^\nabla(\operatorname{grad}(\varphi),e_j)\\
			&-\lambda^2e_j(\varphi)R^\nabla(\operatorname{grad}(\varphi),\tilde V)-\lambda\nabla_{\operatorname{grad}(\varphi)}R^\nabla(\tilde V,e_j)-\lambda\nabla_{\tilde V}R^\nabla(\operatorname{grad}(\varphi),e_j)\\
			&+(n-4)R^\nabla(D_{\tilde V}\operatorname{grad}(\varphi),e_j)+\lambda R^\nabla(\tilde V,D_{e_j}\operatorname{grad}(\varphi)),R^\nabla(\tilde W,e_j)\rangle
		\end{align*}
		For any $x\in S^n$, note that $\sum_{k=1}^{n+1}q(v_k,v_k)(x)$ is independent to the choice of $\{v_k\}$, we may assume $v_{n+1}=x$ and $\{v_k\mid1\le k\le n\}$ is an orthogonal basis of $T_xS^n\subset T_x\mathbb{R}^{n+1}$. From proposition \ref{conformal vector field}, we have $V_k=e_k$ for $1\le k\le n$ and $V_{n+1}=0$ at $x$. Thus we have
		\begin{align*}
			\sum_{k=1}^{n+1}\sum_{j=1}^n\langle R^\nabla(\tilde V_k,e_j),R^\nabla(\tilde V_k,e_j)\rangle=e^{2\lambda\varphi}\sum_{i,j=1}^n\langle R^\nabla(e_i,e_j),R^\nabla(e_i,e_j)\rangle=2e^{2\lambda\varphi}|R^\nabla|_g^2.
		\end{align*}
		Since $\sum_kf_{v_k}^2\equiv1$ is a constant map, we have
		\begin{align*}
			\sum_{k=1}^{n+1}f_{v_k}\tilde V_k=\frac12e^{\lambda\varphi}\operatorname{grad}(\sum_{k=1}^{n+1}f_v^2)=0,
		\end{align*}
		and thus
		\begin{align*}
			\sum_{k=1}^{n+1}\sum_{j=1}^n\langle e^{\lambda\varphi}f_vR^\nabla(\operatorname{grad}(\varphi),e_j),R^\nabla(\tilde V_k,e_j)\rangle=0.
		\end{align*}
		We also have
		\begin{align*}
			\sum_{k=1}^{n+1}\sum_{j=1}^n\langle e_j(\varphi)R^\nabla(\operatorname{grad}(\varphi),\tilde V_k),R^\nabla(\tilde V_k,e_j)\rangle=-e^{2\lambda\varphi}|i_{\operatorname{grad}(\varphi)}R^\nabla|_g^2.
		\end{align*}
		We assume $De_i=0$ at $x$. Note that
		\begin{align*}
			&\sum_{i,j=1}^n\langle\nabla_{e_i}R^\nabla(\operatorname{grad}(\varphi),e_j),R^\nabla(e_i,e_j)\rangle\\
			=&-\sum_{i,j=1}(\langle\nabla_{\operatorname{grad}(\varphi)}R^\nabla(e_j,e_i),R^\nabla(e_i,e_j)\rangle+\langle\nabla_{e_j}R^\nabla(e_i,\operatorname{grad}(\varphi)),R^\nabla(e_i,e_j)\rangle\\
			=&\operatorname{grad}(\varphi)(|R^\nabla|_g^2)-\sum_{i,j=1}^n\langle\nabla_{e_i}R^\nabla(\operatorname{grad}(\varphi),e_j),R^\nabla(e_i,e_j)\rangle.
		\end{align*}
		Thus we have
		\begin{align*}
			\sum_{k=1}^{n+1}\sum_{j=1}^n\langle \nabla_{\tilde V_k}R^\nabla(\operatorname{grad}(\varphi),e_j),R^\nabla(\tilde V_k,e_j)\rangle=e^{2\lambda\varphi}\sum_{i,j=1}^n\langle\nabla_{e_i}R^\nabla(\operatorname{grad}(\varphi),e_j),R^\nabla(e_i,e_j)\rangle=\frac12e^{2\lambda\varphi}\operatorname{grad}(\varphi)(|R^\nabla|_g^2).
		\end{align*}
		Still assuming $De_i=0$ at $x$, we have
		\begin{align*}
			&\sum_{k=1}^{n+1}\sum_{j=1}^n\langle (n-4)R^\nabla(D_{\tilde V_k}\operatorname{grad}(\varphi),e_j)+\lambda R^\nabla(\tilde V_k,D_{e_j}\operatorname{grad}(\varphi)),R^\nabla(\tilde V_k,e_j)\rangle\\
			=&(n-4+\lambda)e^{2\lambda\varphi}\sum_{i,j=1}^n\langle R^\nabla(D_{e_i}\operatorname{grad}(\varphi),e_j),R^\nabla(e_i,e_j)\rangle
		\end{align*}
		and
		\begin{align*}
			&\sum_{i,j=1}^n\langle R^\nabla(D_{e_i}\operatorname{grad}(\varphi),e_j),R^\nabla(e_i,e_j)\rangle\\
			=&\sum_{i,j=1}^ne_i(\langle R^\nabla(\operatorname{grad}(\varphi),e_j),R^\nabla(e_i,e_j)\rangle)-\langle\nabla_{e_i}R^\nabla(\operatorname{grad}(\varphi),e_j),R^\nabla(e_i,e_j)\rangle-\langle R^\nabla(\operatorname{grad}(\varphi),e_j),\nabla_{e_i}R^\nabla(e_i,e_j)\rangle\\
			=&\operatorname{div}(\sum_{j=1}^n\langle R^\nabla(\operatorname{grad}(\varphi),e_j),R^\nabla(\cdot,e_j)\rangle)-\frac12\operatorname{grad}(\varphi)(|R^\nabla|_g^2)+(n-4)|i_{\operatorname{grad}(\varphi)}R^\nabla|_g^2.
		\end{align*}
		By direct computation, we obtain
		\begin{align*}
			-\int_{S^n}e^{(n-4+2\lambda)\varphi}\operatorname{div}(\sum_{j=1}^n\langle R^\nabla(\operatorname{grad}(\varphi),e_j),R^\nabla(\cdot,e_j)\rangle)dV_g=(n-4+2\lambda)\int_{S^n}e^{(n-4+2\lambda)\varphi}|i_{\operatorname{grad}(\varphi)}R^\nabla|_g^2dV_g
		\end{align*}
		and
		\begin{align*}
			\frac12\int_{S^n}e^{(n-4+2\lambda)\varphi}\operatorname{grad}(\varphi)(|R^\nabla|_g^2)dV_g=\frac12\int_{S^n}e^{(n-4+2\lambda)\varphi}(\Delta\varphi-(n-4+2\lambda)|\operatorname{grad}(\varphi)|_g^2)|R^\nabla|_g^2dV_g.
		\end{align*}
		Thus, we finish the proof.
	\end{proof}
	From the lemma above and letting $\lambda=0$, we can immediately prove the Theorem \ref{thm 1} on $S^n$
	\begin{theorem}
		Assume $(S^n,g)$ is the standard sphere and $\tilde g=e^{2\varphi}g$ is a conformal metric. If $n\ge5$ and
		\begin{align*}
			\frac12\Delta\varphi+\frac{4-n}2|\operatorname{grad}(\varphi)|_g^2+2>0,
		\end{align*}
		then there is no weakly stable Yang-Mills connections on $(S^n,\tilde g)$.
	\end{theorem}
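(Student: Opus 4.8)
The plan is to specialize Proposition \ref{sum L(B_V_k)} to $\lambda=0$ and extract a sign. Setting $\lambda=0$ makes the auxiliary field $\tilde V=e^{\lambda\varphi}V$ reduce to $V$ itself, so the test variations are simply $B_{v_k}=i_{V_k}R^\nabla$, and the term carrying the factor $\lambda(8-2n-\lambda)$ drops out. What survives is
\begin{align*}
	\sum_{k=1}^{n+1}\mathscr{L}(B_{v_k})=\int_{S^n}e^{(n-4)\varphi}\left(\frac{4-n}2\Delta\varphi+\frac{(n-4)^2}2|\operatorname{grad}(\varphi)|_g^2+8-2n\right)|R^\nabla|_g^2\,dV_g.
\end{align*}

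First I would rewrite the scalar coefficient so that it reproduces the hypothesis verbatim. Using $8-2n=2(4-n)$ and $\tfrac{(n-4)^2}2=(4-n)\cdot\tfrac{4-n}2$, the parenthetical factor becomes $(4-n)\bigl(\tfrac12\Delta\varphi+\tfrac{4-n}2|\operatorname{grad}(\varphi)|_g^2+2\bigr)$, so that
\begin{align*}
	\sum_{k=1}^{n+1}\mathscr{L}(B_{v_k})=(4-n)\int_{S^n}e^{(n-4)\varphi}\left(\frac12\Delta\varphi+\frac{4-n}2|\operatorname{grad}(\varphi)|_g^2+2\right)|R^\nabla|_g^2\,dV_g.
\end{align*}
The bracket here is precisely the quantity assumed strictly positive. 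Since $n\ge5$ forces $4-n\le-1<0$, while $e^{(n-4)\varphi}>0$ and $|R^\nabla|_g^2\ge0$ pointwise, the integrand is nonnegative and the prefactor is negative; hence $\sum_k\mathscr{L}(B_{v_k})\le0$.

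Next I would invoke weak stability. By Definition \ref{def:stable} together with the remark that weak stability is equivalent to nonnegativity of the second variation, every $\mathscr{L}(B)$ is nonnegative; in particular each $\mathscr{L}(B_{v_k})\ge0$, so their sum is $\ge0$. Combining the two inequalities forces $\sum_k\mathscr{L}(B_{v_k})=0$. Because the integrand is the product of the strictly positive weight $e^{(n-4)\varphi}\bigl(\tfrac12\Delta\varphi+\tfrac{4-n}2|\operatorname{grad}(\varphi)|_g^2+2\bigr)$ with $|R^\nabla|_g^2$, vanishing of the integral yields $|R^\nabla|_g^2\equiv0$, i.e. $R^\nabla\equiv0$ and $\nabla$ is flat. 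Thus no nontrivial weakly stable Yang-Mills connection can exist.

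I do not expect a genuine obstacle, since Proposition \ref{sum L(B_V_k)} already absorbs the entire curvature computation. The only points needing care are bookkeeping: performing the factorization so that the coefficient matches the hypothesis exactly, and arranging the logic so that a closed-form bound that is $\le0$ and a stability bound that is $\ge0$ collapse to equality. The \emph{strictness} of the hypothesis is essential here—it is what upgrades the equality of integrals to the pointwise conclusion $R^\nabla\equiv0$, rather than merely constraining $R^\nabla$ on the zero set of the bracket.
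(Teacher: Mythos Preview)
Your proposal is correct and follows exactly the paper's approach: the paper's proof is the single sentence ``from the lemma above and letting $\lambda=0$, we can immediately prove the Theorem,'' and you have faithfully unpacked that sentence, including the factorization of the scalar coefficient as $(4-n)$ times the hypothesis and the squeeze between the stability inequality and the sign of the integral.
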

	Note that
	\begin{align*}
		|i_{\operatorname{grad}(\varphi)}R^\nabla(e_j)|\le\sum_{i=1}^n|e_i(\varphi)||R^\nabla(e_i,e_j)|\le|\operatorname{grad}(\varphi)|_g(\sum_{i=1}^n|R^\nabla(e_i,e_j)|^2)^{\frac12}
	\end{align*}
	for any $e_j$, thus
	\begin{align*}
		|i_{\operatorname{grad}(\varphi)}R^\nabla|_g^2=\sum_{j=1}^n|i_{\operatorname{grad}(\varphi)}R^\nabla(e_j)|^2\le|\operatorname{grad}(\varphi)|_g^2\sum_{i,j=1}^n|R^\nabla(e_i,e_j)|^2=2|\operatorname{grad}(\varphi)|_g^2|R^\nabla|_g^2.
	\end{align*}
	By direct calculation, we have
	\begin{align*}
		\tilde D_X\tilde V=-e^{\lambda\varphi}X+(\lambda+1)X(\varphi)\tilde V+\tilde V(\varphi)X-\tilde g(X,\tilde V)\widetilde{\operatorname{grad}}(\varphi).
	\end{align*}
	For any $x\in S^n$, assume $\{\tilde e_i\}$ be an orthogonal basis of $TS^n$ respect to $\tilde g$ with $\tilde D\tilde e_i(x)=0$. Then at $x$ we have
	\begin{align*}
		\delta^{\tilde\nabla}i_{\tilde V}R^\nabla&=-\sum_{i=1}^n\tilde\nabla_{\tilde e_i}i_{\tilde V}R^\nabla(\tilde e_i)=-\sum_{i=1}^n\nabla_{\tilde e_i}(R^\nabla(\tilde V,\tilde e_i))=-\delta^{\tilde\nabla}R^\nabla(\tilde V)-\sum_{i=1}^nR^\nabla(\tilde D_{\tilde e_i}\tilde V,\tilde e_i)\\
		&=-(\lambda+2)R^\nabla(\tilde V,\widetilde{\operatorname{grad}}(\varphi)).
	\end{align*}
	Thus $e^{-2\varphi}i_VR^\nabla\in\operatorname{ker}(\delta^{\tilde\nabla})$ for any $v\in\mathbb{R}^{n+1}$. Taking $\lambda=-2$, we obtain
	\begin{align*}
		\sum_{k=1}^n\mathscr{L}(B_{v_k})\le\int_{S^n}e^{(n-8)\varphi}(\frac{4-n}2\Delta\varphi+\frac{n^2-16}2|\operatorname{grad}(\varphi)|_g^2+8-2n)|R^\nabla|_g^2dV_g.
	\end{align*}
	Then we can prove Theorem \ref{thm 2}
	\begin{theorem}
		Assume $(S^n,g)$ is the standard sphere and $\tilde g=e^{2\varphi}g$ is a conformal metric. If
		\begin{align*}
			\frac{4-n}2\Delta\varphi+\frac{n^2-16}2|\operatorname{grad}(\varphi)|_g^2+8-2n\le0,
		\end{align*}
		then there is no stable Yang-Mills connections on $(S^n,\tilde g)$.	
	\end{theorem}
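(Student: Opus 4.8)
The plan is to argue by contradiction, using the summed second-variation estimate proved immediately above together with the definition of stability. Suppose $\nabla$ is a nontrivial Yang–Mills connection on $(S^n,\tilde g)$ which is stable, i.e. the least eigenvalue $\lambda_1$ of $\tilde{\mathscr S}$ on $\operatorname{ker}(\delta^{\tilde\nabla})$ is strictly positive (Definition \ref{def:stable}). Then
\begin{align*}
\mathscr L(B)=\int_{S^n}\langle\tilde{\mathscr S}(B),B\rangle_{\tilde g}\,dV_{\tilde g}\ge\lambda_1\,\|B\|_{\tilde g}^2>0
\end{align*}
for every $B\in\operatorname{ker}(\delta^{\tilde\nabla})$ with $B\not\equiv0$, so it suffices to exhibit directions in $\operatorname{ker}(\delta^{\tilde\nabla})$ whose second variations sum to a nonpositive number while not all vanishing.

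First I would select the test directions. The choice $\lambda=-2$ is exactly the one making $B_v=i_{\tilde V}R^\nabla=e^{-2\varphi}i_VR^\nabla$ lie in $\operatorname{ker}(\delta^{\tilde\nabla})$, since $\delta^{\tilde\nabla}i_{\tilde V}R^\nabla=-(\lambda+2)R^\nabla(\tilde V,\widetilde{\operatorname{grad}}(\varphi))$ vanishes there; hence each $B_{v_k}$ is a genuine variation orthogonal to the gauge orbit. Fixing an orthonormal basis $\{v_1,\dots,v_{n+1}\}$ of $\mathbb R^{n+1}$, the pointwise identity from the proof of Proposition \ref{sum L(B_V_k)} gives $\sum_{k}|B_{v_k}|_g^2=\sum_{k,j}|R^\nabla(\tilde V_k,e_j)|^2=2e^{-4\varphi}|R^\nabla|_g^2$, which is positive wherever $R^\nabla\neq0$. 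Since $\nabla$ is nontrivial, $R^\nabla\not\equiv0$, so at least one $B_{v_k}$ is not identically zero.

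The contradiction is then immediate. Stability forces $\mathscr L(B_{v_k})\ge0$ for all $k$, with strict inequality for the index where $B_{v_k}\not\equiv0$, whence $\sum_k\mathscr L(B_{v_k})>0$. On the other hand, the estimate
\begin{align*}
\sum_{k}\mathscr L(B_{v_k})\le\int_{S^n}e^{(n-8)\varphi}\Big(\tfrac{4-n}2\Delta\varphi+\tfrac{n^2-16}2|\operatorname{grad}(\varphi)|_g^2+8-2n\Big)|R^\nabla|_g^2\,dV_g
\end{align*}
proved just before the statement, combined with the hypothesis that the bracket is $\le0$, yields $\sum_k\mathscr L(B_{v_k})\le0$. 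These contradict each other, so no nontrivial stable Yang–Mills connection exists.

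The step I expect to be the true obstacle is the borderline dimension $n=4$, for which the hypothesis reads $0\le0$ and holds for every $\varphi$. Here the Cauchy–Schwarz bound $|i_{\operatorname{grad}(\varphi)}R^\nabla|_g^2\le2|\operatorname{grad}(\varphi)|_g^2|R^\nabla|_g^2$ used to pass from the exact identity of Proposition \ref{sum L(B_V_k)} to the displayed estimate is applied against the coefficient $\lambda(8-2n-\lambda)=4(n-5)$, which is negative at $n=4$; the estimate then points the wrong way and the summed second variation is in fact nonnegative, so the argument above degenerates. To reach the $S^4$ conclusion I would instead use the conformal invariance of the Yang–Mills functional in dimension four, reducing to the round metric via $\operatorname{YM}_{\tilde g}=\operatorname{YM}_g$, and work with the exact (unestimated) second-variation identity to produce a nonzero element of $\operatorname{ker}(\delta^{\tilde\nabla})$ with vanishing second variation, thereby forcing $\lambda_1=0$ and ruling out strict stability.
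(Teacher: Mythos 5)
For $n\ge5$ your argument coincides with the paper's: the same choice $\lambda=-2$ putting $B_{v_k}=e^{-2\varphi}i_{V_k}R^\nabla$ into $\operatorname{ker}(\delta^{\tilde\nabla})$, the same displayed estimate, and the contradiction with $\lambda_1>0$ that the paper leaves implicit and that you spell out correctly (including the point that $\sum_k|B_{v_k}|_g^2=2e^{-4\varphi}|R^\nabla|_g^2$ forces some $B_{v_k}\not\equiv0$ when $R^\nabla\not\equiv0$).

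Your reservation about $n=4$ is not a defect of your write-up but a genuine gap in the paper's own proof, and you have located it precisely. The displayed inequality preceding the theorem is obtained from Proposition \ref{sum L(B_V_k)} by estimating the term with coefficient $\lambda(8-2n-\lambda)=4(n-5)$ via $|i_{\operatorname{grad}(\varphi)}R^\nabla|_g^2\le2|\operatorname{grad}(\varphi)|_g^2|R^\nabla|_g^2$, which requires $4(n-5)\ge0$. At $n=4$ the exact identity reads
\begin{align*}
\sum_{k=1}^{5}\mathscr{L}(B_{v_k})=\int_{S^4}e^{-4\varphi}\left(8|\operatorname{grad}(\varphi)|_g^2|R^\nabla|_g^2-4|i_{\operatorname{grad}(\varphi)}R^\nabla|_g^2\right)dV_g\ge0,
\end{align*}
by the very same Cauchy--Schwarz bound, so these test directions yield no contradiction with stability, and the ``especially on $S^4$'' assertion is not actually established by the text. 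Your proposed repair via conformal invariance in dimension four is only a sketch and has its own gap: although $\operatorname{YM}_{\tilde g}=\operatorname{YM}_g$, stability is defined on $\operatorname{ker}(\delta^{\tilde\nabla})$, which does depend on $\varphi$; after projecting the round-metric test forms $i_{V_k}R^\nabla$ onto $\operatorname{ker}(\delta^{\tilde\nabla})$ (which leaves $\mathscr{L}$ unchanged at a critical point) you must still exclude the degenerate case in which every projection vanishes, i.e. each $i_{V_k}R^\nabla$ lies in $\operatorname{Im}(d^\nabla)$. So your proof is correct and identical to the paper's for $n\ge5$, while for $n=4$ neither your argument nor the paper's closes the case.
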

	\section{The stability of Yang-Mills connections on $S^{n_1}\times...\times S^{n_q}$}\ 
	
	In this section, we will prove Theorem \ref{thm:S^n times S^m}. Assume $M=S^{n_1}\times...\times S^{n_q}$ is a product space with the product metric $g=g_{S^{n_1}}\oplus...\oplus g_{S^{n_q}}$, where $g_{S^{n_k}}$ is the standard sphere metric in $S^{n_k}$. Let $D^k$ be the Levi-Civita connection on $S^{n_k}$, then for any $x=(x_1,...,x_q)\in M$ and $X=\sum_kX_k\in T_xM=\oplus_k T_{x_k}S^{n_k}$, we have
	\begin{align*}
		DX=\sum_{k=1}^qD^kX_k.
	\end{align*}
	We still construct the variation by using gradient conformal vector fields on each component $S^{n_k}$. Such vector fields have the following properties.
	\begin{proposition}
		For any vector $v^k\in\mathbb{R}^{n_k+1}$, let $F_{v^k}(x)=v^k\cdot x$ be the linear map and $f_{v^k}=F_{v^k}\mid_{S^{n_k}}$ be the restriction on $S^{n_k}$. Then $V^k:=\operatorname{grad}_{S^{n_k}}(f_{v^k})$ satisfies
		\begin{align*}
			(1)\ &D_XV^k=-f_{v^k}X\textrm{ for any }X\in\mathscr{X}(S^{n_k}),\\
			(2)\ &D_XV^k=0\textrm{ for any }X\in\mathscr{X}(S^{n_p})\ (p\ne k),\\
			(3)\ &D^\ast DV^k=V^k.
		\end{align*}
	\end{proposition}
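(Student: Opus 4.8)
The plan is to reduce all three identities to the single-sphere Proposition \ref{conformal vector field} by exploiting the product decomposition $D=\sum_p D^p$ of the Levi-Civita connection recorded above. The key preliminary observation is that $V^k=\operatorname{grad}_{S^{n_k}}(f_{v^k})$ is a vector field lying entirely in the $k$-th factor and depending only on the $k$-th coordinate: concretely it is the tangential projection $V^k=v^k-f_{v^k}x_k$ of the fixed ambient vector $v^k$, where $x_k\in S^{n_k}\subset\mathbb{R}^{n_k+1}$ is the (outward unit normal) position vector of the $k$-th factor.

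For (1), both $X$ and $V^k$ are tangent to $S^{n_k}$, so the cross terms in $D=\sum_p D^p$ drop out and $D_XV^k=D^k_XV^k$; applying Proposition \ref{conformal vector field} to $S^{n_k}$ gives $D^k_XV^k=-f_{v^k}X$. One can equally verify this directly by projecting the flat ambient derivative $\bar D_X(v^k-f_{v^k}x_k)=-X(f_{v^k})\,x_k-f_{v^k}X$ onto $TS^{n_k}$ and discarding the normal term $-X(f_{v^k})x_k$. For (2), take $X\in\mathscr{X}(S^{n_p})$ with $p\ne k$; since $V^k$ has vanishing $p$-component and the product connection has no cross terms, $D_XV^k=0$. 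Equivalently, in a Riemannian product the covariant derivative of a vector field tangent to one factor, taken along a direction tangent to a different factor, vanishes.

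For (3), I would choose an orthonormal frame of $M$ that is the disjoint union of orthonormal frames $\{e^p_\alpha\}$ of the individual spheres $S^{n_p}$, and split the rough Laplacian $D^\ast DV^k=-\sum_{p,\alpha}(D_{e^p_\alpha}D_{e^p_\alpha}V^k-D_{D_{e^p_\alpha}e^p_\alpha}V^k)$ along the factors. By (2), every summand with $p\ne k$ vanishes, because both $D_{e^p_\alpha}V^k$ and the direction $D_{e^p_\alpha}e^p_\alpha=D^p_{e^p_\alpha}e^p_\alpha$ are tangent to $S^{n_p}$; the remaining block $p=k$ reassembles into the intrinsic rough Laplacian $(D^k)^\ast D^kV^k$ on $S^{n_k}$, which equals $V^k$ by Proposition \ref{conformal vector field}.

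The computations are entirely routine once the product decomposition is in place; the only point requiring care is the clean splitting of the connection, and hence of the rough Laplacian, into its factor pieces with no surviving cross terms. This is exactly where the Riemannian-product structure $D=\sum_p D^p$ together with the factor-localization of $V^k$ is used, so I would make that step explicit before proceeding to the termwise evaluation.
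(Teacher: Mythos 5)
Your proposal is correct. The paper states this proposition without proof (as it does the single-sphere Proposition \ref{conformal vector field} on which your reduction relies), so there is nothing to diverge from: your identification $V^k=v^k-f_{v^k}x_k$, the vanishing of cross terms in the product connection, and the factorwise splitting of the rough Laplacian constitute precisely the standard verification the authors leave implicit, and each step checks out.
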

	To avoid any ambiguity, we designate the vector field corresponding to the vector $v^k$ as $V^k$. For any $x=(x_1,...,x_q)\in M$, assume $\{e_i^p\}$ be an orthogonal basis of $T_{x_p}S^{n_p}\subset T_xM$. If $\nabla$ is a Yang-Mills connection satisfying (\ref{YM equation}), by direct calculation we have
	\begin{align*}
		\delta^\nabla i_{V^k}R^\nabla=-\sum_{p=1}^q\sum_{i=1}^{n_p}\nabla_{e_i^p}i_{V^k}R^\nabla=-\delta^\nabla R^\nabla(V^k)-\sum_{p=1}^q\sum_{i=1}^{n_p}R^\nabla(D_{e_i^p}V^k,e_i^p)=0.
	\end{align*}
	Thus the second variation along $i_{V^k}R^\nabla$ is
	\begin{align*}
		\mathscr{L}(i_{V^k}R^\nabla)=\int_M\langle\mathscr{S}(i_{V^k}R^\nabla),i_{V^k}R^\nabla\rangle dV,
	\end{align*}
	where
	\begin{align*}
		\mathscr{S}(i_{V^k}R^\nabla)=\Delta^\nabla i_{V^k}R^\nabla+\mathfrak{R}^\nabla_g(i_{V^k}R^\nabla).
	\end{align*}
	\begin{lemma}\label{L(i_VR) in S^m times S^n}
		For any vector $v^k\in\mathbb{R}^{n_k+1}$, assume $V^k$ be the vector field defined above, then we have
		\begin{align*}
			\mathscr{L}(i_{V^k}R^\nabla)=\int_M\sum_{p=1}^q\sum_{i=1}^{n_p}(2+2\delta_{pk}-n_p)|R^\nabla(V^k,e_i^p)|^2+2f_{v^k}\sum_{p=1}^q\sum_{j=1}^{n_p}\sum_{i=1}^{n_k}\langle\nabla_{e_i^k}R^\nabla(e_i^k,e_j^p),R^\nabla(V^k,e_j^p)\rangle dV_g.
		\end{align*}
	\end{lemma}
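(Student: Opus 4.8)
The plan is to follow the strategy of the proof of Lemma \ref{S(iV_R)}, adapted to the product geometry. Since $\delta^\nabla i_{V^k}R^\nabla=0$ has already been verified, one has $\Delta^\nabla i_{V^k}R^\nabla=\delta^\nabla d^\nabla i_{V^k}R^\nabla$, so it suffices to produce a pointwise formula for $\mathscr{S}(i_{V^k}R^\nabla)(e_j^p)=\delta^\nabla d^\nabla i_{V^k}R^\nabla(e_j^p)+\mathfrak{R}^\nabla_g(i_{V^k}R^\nabla)(e_j^p)$, pair it against $i_{V^k}R^\nabla(e_j^p)=R^\nabla(V^k,e_j^p)$, and integrate. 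At a point $x=(x_1,\dots,x_q)$ I would fix an orthonormal frame $\{e_i^r\}$ adapted to the product with $De_i^r=0$ at $x$. The three geometric inputs are: the identity $D_XV^k=-f_{v^k}P_kX$, where $P_k$ is orthogonal projection onto the factor $TS^{n_k}$ (so $P_kX=X$ for $X\in\mathscr{X}(S^{n_k})$ and $P_kX=0$ otherwise), which collects properties $(1)$ and $(2)$; the block-diagonal form of the curvature tensor $R_M$ of the product; and $\operatorname{Ric}(V^k)=(n_k-1)V^k$, coming from the factor carrying $V^k$.

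First I would compute $d^\nabla B$ with $B=i_{V^k}R^\nabla$. From $(\nabla_XB)(Y)=(\nabla_XR^\nabla)(V^k,Y)+R^\nabla(D_XV^k,Y)$ together with the second Bianchi identity $(\nabla_XR^\nabla)(V^k,Y)-(\nabla_YR^\nabla)(V^k,X)=(\nabla_{V^k}R^\nabla)(X,Y)$, one gets $d^\nabla B(X,Y)=(\nabla_{V^k}R^\nabla)(X,Y)-f_{v^k}\bigl[R^\nabla(P_kX,Y)-R^\nabla(P_kY,X)\bigr]$. Applying $\delta^\nabla=-\sum\nabla_{e_i^r}i_{e_i^r}$ I would treat the two pieces separately. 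On the $\nabla_{V^k}R^\nabla$ piece I commute the two covariant derivatives by the Ricci (curvature-commutation) identity for the $\mathfrak{g}_E$-valued form $R^\nabla$; this yields four contributions: the term $\nabla_{V^k}(\delta^\nabla R^\nabla)$, which vanishes by the Yang--Mills equation $(\ref{YM equation})$; the term $-\mathfrak{R}^\nabla_g(B)$, which cancels the $+\mathfrak{R}^\nabla_g(B)$ in $\mathscr{S}$; the two base-curvature terms $\sum R^\nabla(R_M(e_i^r,V^k)e_i^r,e_j^p)$ and $\sum R^\nabla(e_i^r,R_M(e_i^r,V^k)e_j^p)$; and, from the bracket $[e_i^r,V^k]=D_{e_i^r}V^k=-f_{v^k}P_ke_i^r$, a contribution equal to $f_{v^k}\sum_{i=1}^{n_k}(\nabla_{e_i^k}R^\nabla)(e_i^k,e_j^p)$.

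For the $-f_{v^k}[\,\cdot\,]$ piece, Leibniz splits it into terms where the derivative lands on $f_{v^k}$ and terms where it lands on $R^\nabla$. Using $\operatorname{grad}(f_{v^k})=V^k$ and $\sum\langle V^k,e_i^r\rangle e_i^r=V^k$, the former collapse to $(1+\delta_{pk})R^\nabla(V^k,e_j^p)$, the extra $\delta_{pk}$ arising because $P_ke_j^p$ is nonzero only for $p=k$; the latter produce a second copy of $f_{v^k}\sum_{i=1}^{n_k}(\nabla_{e_i^k}R^\nabla)(e_i^k,e_j^p)$, its companion half being $-\delta^\nabla R^\nabla(P_ke_j^p)$, which again vanishes by Yang--Mills. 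Evaluating the two base-curvature sums in the product metric, $\sum R_M(e_i^r,V^k)e_i^r=-\operatorname{Ric}(V^k)=-(n_k-1)V^k$ contributes $(1-n_k)R^\nabla(V^k,e_j^p)$, while $\sum R^\nabla(e_i^r,R_M(e_i^r,V^k)e_j^p)$ survives only for $p=k$, where it equals $\delta_{pk}R^\nabla(V^k,e_j^p)$. Collecting the coefficient of $R^\nabla(V^k,e_j^p)$ gives $(1-n_k)+\delta_{pk}+(1+\delta_{pk})=2+2\delta_{pk}-n_k$, the dimensional term being the Ricci eigenvalue $n_k-1$ of the sphere tangent to $V^k$, while the two $\nabla R^\nabla$ contributions add to $2f_{v^k}\sum_{i=1}^{n_k}(\nabla_{e_i^k}R^\nabla)(e_i^k,e_j^p)$; pairing with $B(e_j^p)$, summing over $p,j$, and integrating produces the stated identity.

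I expect the main difficulty to lie in the base-curvature terms in the product metric. Two features make them delicate. First is the off-diagonal vanishing: the term $\sum R^\nabla(e_i^r,R_M(e_i^r,V^k)e_j^p)$ is present for $p=k$ but disappears for $p\neq k$, since $R_M(X,Y)Z=0$ unless the three vectors share a common factor; this is exactly the mechanism producing the $\delta_{pk}$. Second, unlike the single-sphere computation of Lemma \ref{S(iV_R)}, the partial sums $\sum_{i=1}^{n_k}(\nabla_{e_i^k}R^\nabla)(e_i^k,\cdot)$ run over only the $S^{n_k}$ directions and are therefore \emph{not} the full codifferential $\delta^\nabla R^\nabla$; only the complementary sums collapse by Yang--Mills, so these partial terms must be retained, and their doubling is precisely what generates the $2f_{v^k}(\cdots)$ term. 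Once one has settled which Ricci/curvature contractions collapse to the factor dimension and which to a diagonal $\delta_{pk}$, the remaining work is the same Leibniz-and-Bianchi bookkeeping as in Lemma \ref{S(iV_R)}.
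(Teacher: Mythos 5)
Your argument is correct, but it takes a genuinely different route from the paper's. The paper never expands $\delta^\nabla d^\nabla i_{V^k}R^\nabla$ by direct commutation; instead it invokes the Bochner--Weizenb\"{o}ck formula twice: once for the $1$-form $i_{V^k}R^\nabla$, rewriting $\Delta^\nabla+\mathfrak{R}^\nabla_g$ as $\nabla^\ast\nabla+2\mathfrak{R}^\nabla+(\,\cdot\,)\circ\operatorname{Ric}_M$, and once for the $2$-form $R^\nabla$ itself, where $\Delta^\nabla R^\nabla=0$ (Yang--Mills plus Bianchi) turns $\nabla^\ast\nabla R^\nabla$ into $-\mathfrak{R}^\nabla(R^\nabla)-R^\nabla\circ(\operatorname{Ric}_M\wedge I+2R_M)$. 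All of the bookkeeping you do explicitly --- the base-curvature contractions, the bracket term, the cancellation of $\mathfrak{R}^\nabla_g$ against $-\tfrac12\mathfrak{R}^\nabla(R^\nabla)(V^k,\cdot)$ --- is absorbed into these two identities, and the only remaining computation is $\nabla^\ast\nabla i_{V^k}R^\nabla(e_j^p)=\nabla^\ast\nabla R^\nabla(V^k,e_j^p)+R^\nabla(D^\ast DV^k,e_j^p)-2\sum_{s,i}\nabla_{e_i^s}R^\nabla(D_{e_i^s}V^k,e_j^p)$, whose last term is the paper's source of the $2f_{v^k}\sum_i\nabla_{e_i^k}R^\nabla(e_i^k,e_j^p)$ contribution (you obtain the same term split as one copy from the bracket $[e_i^k,V^k]$ and one from Leibniz on the $-f_{v^k}[\cdots]$ piece). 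Your direct Bianchi-plus-commutation derivation is more self-contained and mirrors Lemma~\ref{S(iV_R)}, at the cost of redoing by hand the curvature identities that the Weizenb\"{o}ck formulas package; the paper's version is shorter but leans on the degree-two formula and on $\Delta^\nabla R^\nabla=0$. One point worth flagging: your final coefficient $2+2\delta_{pk}-n_k$ is what the computation actually yields (the dimensional term is the Ricci eigenvalue $n_k-1$ of the factor carrying $V^k$), and it is also what the paper's own proof produces when its pieces $(n_p-1)+1+(2-n_k-n_p)+2\delta_{pk}$ are summed; the $n_p$ in the printed statement of Lemma~\ref{L(i_VR) in S^m times S^n} is a typo. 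This is harmless for the following proposition, since the symmetric sum over $k$ and $p$ together with $|R^\nabla(e_j^k,e_i^p)|=|R^\nabla(e_i^p,e_j^k)|$ makes the two versions agree, but your closing claim that the computation ``produces the stated identity'' should be understood as producing the corrected one.
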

	\begin{proof}
		Using Bochner-Weizenb\"{o}ck formula, we have
		\begin{align*}
			\Delta^\nabla i_{V^k}R^\nabla+\mathfrak{R}^\nabla_g(i_{V^k}R^\nabla)=\nabla^\ast\nabla i_{V^k}R^\nabla+2\mathfrak{R}^\nabla(i_{V^k}R^\nabla)+i_{V^k}R^\nabla\circ\operatorname{Ric}_M.
		\end{align*}
		Assume $\{e_i^p\mid 1\le i\le n_p\}$ be an orthogonal basis of $TS^{n_p}$. Then we have $\operatorname{Ric}_M(e_i^p)=(n_p-1)e_i^p$, and thus
		\begin{align*}
			\langle i_{V^k}R^\nabla\circ \operatorname{Ric}_M,i_{V^k}R^\nabla\rangle=\sum_{p=1}^q\sum_{i=1}^{n_p}(n_p-1)|R^\nabla(V^k,e_i^p)|^2.
		\end{align*}
		For any $x=(x_1,...,x_q)\in M$, we additionally assume that $D^pe_i^p=0$ at $x_p$. Then at $x$ we have
		\begin{align*}
			\nabla^\ast\nabla i_{V^k}R^\nabla(e_j^p)&=-\sum_{s=1}^q\sum_{i=1}^{n_s}\nabla_{e_i^s}\nabla_{e_i^s}i_{V^k}R^\nabla(e_j^p)=-\sum_{s=1}^q\sum_{i=1}^{n_s}\nabla_{e_i^s}(\nabla_{e_i^s}i_{V^k}R^\nabla(e_j^p))\\
			&=-\sum_{s=1}^q\sum_{i=1}^{n_s}\nabla_{e_i^s}(\nabla_{e_i^s}R^\nabla(V^k,e_j^p)+R^\nabla(D_{e_i^s}V^k,e_j^p))\\
			&=\nabla^\ast\nabla R^\nabla(V^k,e_j^p)+R^\nabla(D^\ast DV^k,e_j^p)-2\sum_{s=1}^q\sum_{i=1}^{n_s}\nabla_{e_i^s}R^\nabla(D_{e_i^s}V^k,e_j^p)\\
			&=\nabla^\ast\nabla R^\nabla(V^k,e_j^p)+R^\nabla(V^k,e_j^p)+2f_{v^k}\sum_{i=1}^{n_k}\nabla_{e_i^k}R^\nabla(e_i^k,e_j^p).
		\end{align*}
		Yang-Mills equation (\ref{YM equation}) and Bianchi identity $d^\nabla R^\nabla=0$ shows that $\Delta^\nabla R^\nabla
		=0$. Using Bochner-Weizenb\"{o}ck formula again, we have
		\begin{align*}
			\nabla^\ast\nabla R^\nabla(V^k,e_j^p)=-\mathfrak{R}^\nabla(R^\nabla)(V^k,e_j^p)-R^\nabla\circ(\operatorname{Ric}_M\wedge I+2R_M)(V^k,e_j^p).
		\end{align*}
		It is easily to check that
		\begin{align*}
			-\mathfrak{R}^\nabla(R^\nabla)(V^k,e_j^p)=-2\mathfrak{R}^\nabla(i_{V^k}R^\nabla)(e_j^p).
		\end{align*}
		Using $\operatorname{Ric}_M(V^k)=(n_k-1)V^k$, we have
		\begin{align*}
			-\langle i_{V^k}R^\nabla\circ(\operatorname{Ric}_M\wedge I),i_{V^k}R^\nabla\rangle=\sum_{p=1}^q\sum_{i=1}^{n_p}(2-n_k-n_p)|R^\nabla(V^k,e_i^p)|^2.
		\end{align*}
		Since $R_M(V^k,e_i^p)e_j^{p'}=0$ for $p\ne k$ or $p'\ne k$ and $R_M(V^k,e_i^k)e_j^k=R_{S^{n_k}}(V^k,e_i^k)e_j^k$, we have
		\begin{align*}
			-\langle i_{V^k}R^\nabla\circ(2R_M),i_{V^k}R^\nabla\rangle=-\sum_{i,j=1}^{n_k}\langle R^\nabla(e_j^k,R_{S^{n_k}}(V^k,e_i^k)e_j^k),R^\nabla(V^k,e_j^k)\rangle=2\sum_{i=1}^{n_k}|R^\nabla(V^k,e_i^k)|^2.
		\end{align*}
		Then we finish the proof.
	\end{proof}
	\begin{proposition}
		Assume $\{v_1^k,...,v_{n_k}^k\}$ be an orthogonal basis of $\mathbb{R}^{n_k+1}$, then we have
		\begin{align*}
			\sum_{k=1}^q\sum_{l=1}^{n_k+1}\mathscr{L}(i_{V_l^k}R^\nabla)=\int_M\sum_{p,k=1}^q\sum_{i=1}^{n_p}\sum_{j=1}^{n_k}(2+2\delta_{pk}-n_p)|R^\nabla(e_j^k,e_i^p)|^2dV_g.
		\end{align*}
	\end{proposition}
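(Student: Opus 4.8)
The plan is to sum the formula of Lemma \ref{L(i_VR) in S^m times S^n} over a full orthonormal basis of each $\mathbb{R}^{n_k+1}$ and to exploit the two algebraic identities forced by the constraint $\sum_{l=1}^{n_k+1} f_{v_l^k}^2\equiv1$ on $S^{n_k}$, in complete parallel with the proof of Proposition \ref{sum L(B_V_k)}. I write the integrand of $\mathscr{L}(i_{V_l^k}R^\nabla)$ as the sum of a \emph{curvature term} (the part with coefficient $2+2\delta_{pk}-n_p$) and a \emph{divergence-type term} (the one carrying the factor $f_{v_l^k}$), and treat the two separately.

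For the divergence-type term, summing over $l$ produces the factor $\sum_{l=1}^{n_k+1} f_{v_l^k}\,R^\nabla(V_l^k,e_j^p)=R^\nabla\bigl(\sum_{l=1}^{n_k+1} f_{v_l^k}V_l^k,\,e_j^p\bigr)$. Since $\sum_l f_{v_l^k}V_l^k=\tfrac12\operatorname{grad}_{S^{n_k}}\bigl(\sum_l f_{v_l^k}^2\bigr)=\tfrac12\operatorname{grad}_{S^{n_k}}(1)=0$, this entire contribution vanishes identically, so only the curvature term survives the summation over $l$.

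For the curvature term I use that $\sum_{l=1}^{n_k+1} V_l^k\otimes V_l^k$ is, at each point $x_k\in S^{n_k}$, the tangential projection of the completeness relation $\sum_l v_l^k\otimes v_l^k=\mathrm{Id}_{\mathbb{R}^{n_k+1}}$, hence equals the identity on $T_{x_k}S^{n_k}$. Equivalently, fixing $x$ and choosing $v_{n_k+1}^k=x_k$ with $\{v_l^k\}_{l\le n_k}$ adapted to $T_{x_k}S^{n_k}$ gives $V_l^k=e_l^k$ for $l\le n_k$ and $V_{n_k+1}^k=0$ at $x_k$, exactly as in Proposition \ref{sum L(B_V_k)}. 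Either way,
\[
\sum_{l=1}^{n_k+1}|R^\nabla(V_l^k,e_i^p)|^2=\sum_{j=1}^{n_k}|R^\nabla(e_j^k,e_i^p)|^2,
\]
a quantity independent of the chosen basis. Substituting this into the curvature term and summing over $k$ yields exactly
\[
\int_M\sum_{p,k=1}^q\sum_{i=1}^{n_p}\sum_{j=1}^{n_k}(2+2\delta_{pk}-n_p)|R^\nabla(e_j^k,e_i^p)|^2\,dV_g,
\]
which is the assertion.

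There is no serious obstacle here: once Lemma \ref{L(i_VR) in S^m times S^n} is in hand, the argument is a bookkeeping exercise. The only point requiring care is the justification that the two sums collapse as claimed — namely the vanishing $\sum_l f_{v_l^k}V_l^k=0$ and the completeness relation $\sum_l V_l^k\otimes V_l^k=\mathrm{Id}_{TS^{n_k}}$ — both of which follow immediately from $\{v_l^k\}$ being orthonormal together with $V_l^k=\operatorname{grad}_{S^{n_k}}(f_{v_l^k})$ and $\sum_l f_{v_l^k}^2\equiv1$.
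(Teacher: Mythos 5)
Your proof is correct and takes essentially the same route as the paper's: the identity $\sum_{l}f_{v_l^k}V_l^k=\tfrac12\operatorname{grad}_{S^{n_k}}(1)=0$ kills the term carrying $f_{v_l^k}$, and the pointwise choice $v_{n_k+1}^k=x_k$ with the remaining $v_l^k$ spanning $T_{x_k}S^{n_k}$ collapses the curvature term to $\sum_{j=1}^{n_k}|R^\nabla(e_j^k,e_i^p)|^2$. The only cosmetic difference is your explicit appeal to the completeness relation $\sum_l V_l^k\otimes V_l^k=\mathrm{Id}_{TS^{n_k}}$, which the paper phrases instead as basis-independence of the trace of the quadratic form $q_{k,i,p}$.
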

	\begin{proof}
		Since $\sum_{l=1}^{n_k+1}f_{v^k_l}^2\equiv1$ on $S^{n_k}$ for any $1\le k\le q$, we have
		\begin{align*}             \sum_{l=1}^{n_k+1}f_{v^k_l}V^k_l=\frac12\operatorname{grad}_{S^{n_k}}(\sum_{l=1}^{n_k+1}f_{v^k_l}^2)=0,
		\end{align*}
		and thus the sum of the second term in lemma \ref{L(i_VR) in S^m times S^n} vanishes. To calculate the first term, note that the the trace of $q_{k,i,p}$ is independent to the chioce of $\{v^k_l\}$ for fixed $(k,i,p)$ and $x\in M$, where
		\begin{align*}
			q_{k,i,p}(v^k_l,w^k_l):=\langle R^\nabla(V^k_l,e_i^p),R^\nabla(W^k_l,e^i_p)\rangle(x).
		\end{align*}
		For any $x=(x_1,...,x_q)\in M$, we may assume $v^k_{n_k+1}=x_k$ and $\{v^k_1,...,v^k_{n_k}\}$ is an orthogonal basis of tangent vectors of $S^{n_k}\subset\mathbb{R}^{n_k+1}$ at $x_k$. Then we have $V^k_{n_k+1}=0$ at $x$ and $\{V_1^k,...,V^k_{n_k}\}$ is an orthogonal basis of $T_xS^{n_k}$. Hence we have
		\begin{align*}
			\sum_{l=1}^{n_l+1}|R^\nabla(V^k_l,e_i^p)|^2=\sum_{l=1}^{n_l}|R^\nabla(e^k_l,e_i^p)|^2
		\end{align*}
		for any fixed $(k,i,p)$. Then we finish the proof.
	\end{proof}
	The above proposition directly implies Theorem \ref{thm:S^n times S^m}
	\begin{theorem}
		(1)\ If $n_k\ge5$ for any $1\le k\le q$, then there is no nontrivial weakly stable Yang-Mills on $M$.\\
		(2)\ If $n_k\ge4$ for any $1\le k\le q$, then there is no nontrivial stable Yang-Mills on $M$.
	\end{theorem}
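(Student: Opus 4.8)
The plan is to read off the conclusion directly from the sign of the weight $(2+2\delta_{pk}-n_p)$ in the preceding proposition, playing the non-positivity of the total sum of second variations against the (weak) stability hypothesis. First I would record that each variation $B_{v_l^k}=i_{V_l^k}R^\nabla$ is an admissible test direction: the computation preceding Lemma \ref{L(i_VR) in S^m times S^n} shows $\delta^\nabla i_{V_l^k}R^\nabla=0$, so $B_{v_l^k}\in\ker(\delta^\nabla)=\operatorname{Im}(d^\nabla)^\perp$ lies orthogonal to the gauge orbit (here there is no conformal factor, so the relevant codifferential is $\delta^\nabla$ itself). Consequently, weak stability forces $\mathscr{L}(B_{v_l^k})\ge0$ for every $k,l$, hence $\sum_{k,l}\mathscr{L}(B_{v_l^k})\ge0$, while strict stability upgrades this to $\sum_{k,l}\mathscr{L}(B_{v_l^k})>0$ as soon as some $B_{v_l^k}\not\equiv0$.

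For part (1) I would simply check the sign of the weight. With $n_p\ge5$ the coefficient equals $4-n_p\le-1<0$ on the diagonal $p=k$ and $2-n_p\le-3<0$ off the diagonal, so every summand of the integrand in the proposition is a strictly negative multiple of $|R^\nabla(e_j^k,e_i^p)|^2$. Thus the right-hand side is $\le0$, vanishing only where $R^\nabla$ does; combined with $\sum_{k,l}\mathscr{L}(B_{v_l^k})\ge0$ this squeezes the sum to zero and forces $R^\nabla\equiv0$, i.e. the connection is flat.

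For part (2) the sign analysis is the same, but the diagonal weight $4-n_p$ now only satisfies $4-n_p\le0$ (it vanishes when $n_p=4$), so the integrand is merely non-positive rather than strictly negative, and the proposition still gives $\sum_{k,l}\mathscr{L}(B_{v_l^k})\le0$. To convert this into flatness I would argue by contradiction. If $R^\nabla\not\equiv0$, pick a point $x_0$ with $R^\nabla_{x_0}\ne0$; since the $n_k+1$ conformal gradient fields $\{V_l^k\}_l$ span $T_{x_0}S^{n_k}$ (they are the restrictions of the coordinate gradients), the collection $\{V_l^k\}_{k,l}$ spans $T_{x_0}M$, so $R^\nabla_{x_0}\ne0$ yields some $B_{v_l^k}$ that is nonzero at $x_0$, hence $B_{v_l^k}\not\equiv0$. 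Strict stability then gives $\mathscr{L}(B_{v_l^k})>0$ for that direction and $\ge0$ for the rest, so $\sum_{k,l}\mathscr{L}(B_{v_l^k})>0$, contradicting the non-positivity above; therefore $R^\nabla\equiv0$. Specializing to $q=1$ recovers the single-sphere statement and, in particular, the nonexistence of nontrivial stable Yang-Mills connections on $S^4$.

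The computation doing all the work — the Weitzenböck reduction and the trace identity producing the weight $(2+2\delta_{pk}-n_p)$ — is already contained in Lemma \ref{L(i_VR) in S^m times S^n} and the proposition, so no further estimate is required. The only genuinely delicate point is part (2): because the diagonal weight degenerates to $0$ at $n_p=4$, pointwise negativity of the integrand is unavailable, and one must instead combine the non-positivity of the total sum with \emph{strict} stability. I would take care that the spanning property of the conformal gradient fields is invoked precisely, so that the assumption $R^\nabla\not\equiv0$ genuinely produces a nonzero admissible variation rather than a family that might vanish identically.
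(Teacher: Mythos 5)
Your proposal is correct and follows exactly the route the paper intends: the paper leaves the deduction implicit (``the above proposition directly implies the theorem''), and your sign analysis of the weight $(2+2\delta_{pk}-n_p)$, the admissibility check $\delta^\nabla i_{V^k_l}R^\nabla=0$, and the strict-stability contradiction via the spanning of $T_xM$ by the conformal gradient fields are precisely the details being elided.
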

	\section{The stability of Yang-Mills connections on non-compact manifolds with warped product metric}\ 
	
	In this section, we will prove Theorem \ref{thm 3}. Assume $M^n=I\times N^{n-1}$ is a Riemannian manifold with warped product metric $g_M=drdr+f^2(r)g_N$, where $I\subset\mathbb{R}$ is an open interval and $f\in C^2(I,(0,+\infty))$ satisties conditions (a)-(d) in (\ref{condition for f}). Let $D$ and $D'$ are the Levi-Civita connections of $M$ and $N$ respectively. For any $(r,x')\in M$ let $(y^1,...,y^{n-1})$ be the local normal coordinates of $x'\in N$. Define vector fields $T=\frac{\partial}{\partial r}$ and $Y_i=\frac{\partial}{\partial y^i}$ for $i\in[1,n-1]$, and it is easily to verify that
	\begin{equation}\label{L-C connections on w-p mfd}
		\begin{split}
			&D_TT=0,\\
			&D_TY_i=D_{Y_i}T=f^{-1}f'Y_i,\\
			&D_{Y_i}Y_j=\iota_{r\ast}(D'_{Y_i}Y_j)-f^{-1}f'g(Y_i,Y_j)T,
		\end{split}
	\end{equation}
	where $\iota_r:N\to M$ is the slice embedding such that $\iota_r(x')=(r,x')$. Then $V=f(r)\frac{\partial}{\partial r}$ is a conformal vector field on $M$ satisfying
	\begin{align*}
		D_XV=f'(r)X
	\end{align*}
	for any $X\in\mathscr{X}(M)$.
	
	Let $\nabla$ be a Yang-Mills connection and $\nabla^t=\nabla+tB$ be a family of connections for any $B\in\Omega^1(\mathfrak{g}_E)$. The second variation of Yang-Mills functional on $M$ is given by
	\begin{align*}
		\mathscr{L}(B):=\frac{d^2}{dt^2}YM(\nabla^t)\mid_{t=0}=\int_M|d^\nabla B|^2+\langle\mathfrak{R}^\nabla_g(B),B\rangle dV_g.
	\end{align*}
	Moreover, if we additionally assume that $B$ has compact support, then
	\begin{align*}
		\mathscr{L}(B)=\int_M\langle\delta^\nabla d^\nabla B+\mathfrak{R}^\nabla(B),B\rangle dV_g.
	\end{align*}
	\begin{lemma}\label{S(i_VR) in W-P}
		Let $\nabla$ be a Yang-Mills connection, then we have
		\begin{align*}
			\delta^\nabla d^\nabla i_VR^\nabla+\mathfrak{R}^\nabla_g(i_VR^\nabla)=(n-4)f^{-1}f''i_VR^\nabla.
		\end{align*}
	\end{lemma}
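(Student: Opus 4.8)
The plan is to mirror the computation of Lemma \ref{S(iV_R)}, exploiting that the conformal field $V=f\frac{\partial}{\partial r}$ here obeys the much cleaner relation $D_X V=f'X$ (with no zeroth-order term), so the algebra is lighter than in the conformal-sphere case. First I would record a global identity for $d^\nabla i_V R^\nabla$. Expanding $(d^\nabla i_V R^\nabla)(X,Y)=(\nabla_X i_V R^\nabla)(Y)-(\nabla_Y i_V R^\nabla)(X)$ and using $D_XV=f'X$ gives $(\nabla_X R^\nabla)(V,Y)+f'R^\nabla(X,Y)$ minus its $X\leftrightarrow Y$ swap; the second Bianchi identity $d^\nabla R^\nabla=0$ then collapses the derivative terms, yielding
\[
d^\nabla i_V R^\nabla=\nabla_V R^\nabla+2f'R^\nabla .
\]
This is the warped-product analogue of the first step in Lemma \ref{S(iV_R)} and needs no frame hypothesis.

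Next I would apply $\delta^\nabla$ to each summand in a local orthonormal frame $\{e_i\}$ with $e_0=\frac{\partial}{\partial r}=:T$, the remaining $e_a$ tangent to the fiber, and $De_i=0$ at the base point. For the $2f'R^\nabla$ term, Leibniz produces one piece proportional to $\delta^\nabla R^\nabla$, which vanishes by the Yang-Mills equation (\ref{YM equation}), and one piece carrying $e_i(f')=f''\langle e_i,T\rangle$, which contracts to $-2f^{-1}f''\,i_V R^\nabla$ after using $T=f^{-1}V$. For the $\nabla_V R^\nabla$ term I would invoke the same commutation (Ricci) identity used in Lemma \ref{S(iV_R)}, which expands $-\sum_i\nabla_{e_i}\nabla_V R^\nabla(e_i,e_j)$ into a commutator term reproducing $-\mathfrak{R}_g^\nabla(i_V R^\nabla)(e_j)$, two Riemann-curvature sums $\sum_iR^\nabla(R_M(e_i,V)e_i,e_j)$ and $\sum_iR^\nabla(e_i,R_M(e_i,V)e_j)$, and two remainders $-\sum_i\nabla_V\nabla_{e_i}R^\nabla(e_i,e_j)$ and $-\sum_i\nabla_{[e_i,V]}R^\nabla(e_i,e_j)$. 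The first remainder is $\nabla_V(\delta^\nabla R^\nabla)=0$ and, since $[e_i,V]=f'e_i$ at the base point, the second is $f'\delta^\nabla R^\nabla=0$ as well. Here the simplicity of $D_XV=f'X$ is decisive, as in the sphere case the analogous bracket term was nonzero.

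The remaining work, and the genuine obstacle, is the evaluation of the two Riemann-curvature sums, where the warped geometry enters through the Levi-Civita data (\ref{L-C connections on w-p mfd}). From those connection formulas one computes $R_M(X,T)T=-f^{-1}f''X$ for fiber $X$, whence $\sum_iR_M(e_i,T)e_i=-\operatorname{Ric}_M(T)=(n-1)f^{-1}f''\,T$ and the mixed components $R_M(e_a,T)e_b=f^{-1}f''\delta_{ab}T$. Feeding these in and using $V=fT$, $T=f^{-1}V$ gives $\sum_iR^\nabla(R_M(e_i,V)e_i,e_j)=(n-1)f^{-1}f''\,i_VR^\nabla(e_j)$ and $\sum_iR^\nabla(e_i,R_M(e_i,V)e_j)=-f^{-1}f''\,i_VR^\nabla(e_j)$, for a net curvature contribution $(n-2)f^{-1}f''\,i_VR^\nabla$. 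Adding the $-2f^{-1}f''\,i_VR^\nabla$ coming from the $2f'R^\nabla$ term converts the coefficient $n-2$ into the desired $n-4$, and collecting everything yields $\delta^\nabla d^\nabla i_VR^\nabla+\mathfrak{R}_g^\nabla(i_VR^\nabla)=(n-4)f^{-1}f''\,i_VR^\nabla$. The main care needed is in the warped-product curvature bookkeeping—keeping the split between the Ricci-type sum and the mixed sum correct—since a single sign or index slip there directly corrupts the final coefficient; note also that the lemma is a pointwise identity, so the growth hypotheses (a)--(d) on $f$ play no role here and are reserved for the later global (integration-by-parts) argument.
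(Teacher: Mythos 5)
Your proposal is correct and follows essentially the same route as the paper: rewrite $d^\nabla i_VR^\nabla=\nabla_VR^\nabla+2f'R^\nabla$ via $D_XV=f'X$ and the Bianchi identity, kill the $\delta^\nabla R^\nabla$ and bracket terms by the Yang--Mills equation after commuting derivatives, and extract the coefficient $(n-2)-2=n-4$ from the warped-product curvature identities $R_M(e_a,V)e_b=f^{-1}f''\delta_{ab}V$ and $R_M(e_a,V)T=-f''e_a$. The only cosmetic difference is that you quote the standard warped-product curvature formulas where the paper derives them directly from the Levi-Civita data; the bookkeeping and the final coefficients agree.
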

	\begin{proof}
		For any $x=(r,x')\in M$, we assume $\{e_1,...,e_n\}$ be an orthogonal basis of $TM$ such that $De_i=0$ at $x$. By direct calculation, we have
		\begin{align*}
			&\delta^\nabla d^\nabla i_VR^\nabla(e_j)\\
			=&-\sum_{i=1}^n\nabla_{e_i}(d^\nabla i_VR^\nabla(e_i,e_j))\\
			=&-\sum_{i=1}^n\nabla_{e_i}(\nabla_{e_i}i_VR^\nabla(e_j)-\nabla_{e_j}i_VR^\nabla(e_i))\\
			=&-\sum_{i=1}^n\nabla_{e_i}(\nabla_{e_i}R^\nabla(V,e_j)+R^\nabla(D_{e_i}V,e_j)-\nabla_{e_j}R^\nabla(V,e_i)-R^\nabla(D_{e_j}V,e_i))\\
			=&-\sum_{i=1}^n\nabla_{e_i}(\nabla_VR^\nabla(e_i,e_j)+2f'(r)R^\nabla(e_i,e_j))\\
			=&-\sum_{i=1}^n(\nabla_{e_i}\nabla_VR^\nabla(e_i,e_j)+2f'\nabla_{e_i}R^\nabla(e_i,e_j))-2f''R^\nabla(\operatorname{grad}(f'),e_j).
		\end{align*}
		Using $\operatorname{grad}(f')=f''T=f^{-1}f''V$ and $\sum_i\nabla_{e_i}R^\nabla(e_i,e_j)=-\delta^\nabla R(e_j)=0$, we have
		\begin{align*}
			\delta^\nabla d^\nabla i_VR^\nabla(e_j)=-\sum_{i=1}^n\nabla_{e_i}\nabla_VR^\nabla(e_i,e_j)-2f^{-1}f''R^\nabla(V,e_j).
		\end{align*}
		Applying the commutation formula and note that $[e_i,V]=D_{e_i}V=f'e_i$, we have
		\begin{align*}
			&-\sum_{i=1}^n\nabla_{e_i}\nabla_VR^\nabla(e_i,e_j)\\
			=&-\sum_{i=1}^n[R^\nabla(e_i,V),R^\nabla(e_i,e_j)]+\nabla_V\nabla_{e_i}R^\nabla(e_i,e_j)+\nabla_{[e_i,V]}R^\nabla(e_i,e_j)-R^\nabla(R_M(e_i,V)e_i,e_j)-R^\nabla(e_i,R_M(e_i,V)e_j)\\
			=&-\mathfrak{R}^\nabla(i_VR^\nabla)(e_j)+\nabla_V(\delta^\nabla R^\nabla(e_j))+f'\delta^\nabla R^\nabla(e_j)+\sum_{i=1}^nR^\nabla(R_M(e_i,V)e_i,e_j)+R^\nabla(e_i,R_M(e_i,V)e_j)\\
			=&-\mathfrak{R}^\nabla(i_VR^\nabla)(e_j)+\sum_{i=1}^nR^\nabla(R_M(e_i,V)e_i,e_j)+R^\nabla(e_i,R_M(e_i,V)e_j).
		\end{align*}
		Finally we need to calculate the curvature tensor $R_M$. Let $(y^1,...,y^{n-1})$ be the local normal coordinate of $x'\in N$ and $T,Y_i\in\mathscr{X}(M)$ be the vector fields defined in (\ref{L-C connections on w-p mfd}). Since the above equation is independent to the choice of $\{e_i\}$, we may assume $e_i=f^{-1}Y_i$ for $i\in[1,n-1]$ and $e_n=T$. Using (\ref{L-C connections on w-p mfd}), at $x$ we have
		\begin{align*}
			D_{Y_i}D_VY_j=D_{Y_i}(f'Y_j)=f'\iota_{r\ast}(D'_{Y_i}Y_j)-\delta_{ij}f(f')^2T=f'\iota_{r\ast}(D'_{Y_i}Y_j)-\delta_{ij}(f')^2V,
		\end{align*}
		and
		\begin{align*}
			D_VD_{Y_i}Y_j&=fD_T(\iota_{r\ast}(D'_{Y_i}Y_j)-f^{-1}f'g(Y_i,Y_j)T)\\
			&=-\delta_{ij}f^2(f^{-1}f')'V-f^{-1}f'(g(D_TY_i,Y_j)+g(Y_i,D_TY_j))V\\
			&=-\delta_{ij}(ff''+(f')^2)V,
		\end{align*}
		where we use $D_T(\iota_{r\ast}(D'_{Y_i}Y_j))=0$, since by letting $\Gamma'$ be the Christoffel symbols of $N$ respect to $(y^1,...,y^{n-1})$, we have $\Gamma_{ij}'^k(x')=0$ and thus
		\begin{align*}
			D_T(\iota_{r\ast}(D'_{Y_i}Y_j))(x)=\sum_{k=1}^{n-1}\Gamma_{ij}'^k(x')D_TY_k(x)=0.
		\end{align*}
		Finally, we have $[Y_i,V]=D_{Y_i}V-D_VY_i=0$, and thus
		\begin{align*}
			D_{[Y_i,V]}Y_j=0.
		\end{align*}
		Hence we obtain
		\begin{align*}
			R_M(e_i,V)e_j=f^{-2}R_M(Y_i,V)Y_j=\delta_{ij}f^{-1}f''V
		\end{align*}
		for any $i,j\in[1,n-1]$. Then we have
		\begin{align*}
			\sum_{i=1}^nR^\nabla(R_M(e_i,V)e_i,e_j)+R^\nabla(e_i,R_M(e_i,V)e_j)=(n-2)f^{-1}f''R^\nabla(V,e_j).
		\end{align*}
		Using similar method, we can proof that
		\begin{align*}
			R_M(e_i,V)T=-f''e_i
		\end{align*}
		for any $i\in[1,n-1]$ and hence
		\begin{align*}
			\sum_{i=1}^nR^\nabla(R_M(e_i,V)e_i,T)+R^\nabla(e_i,R_M(e_i,V)T)=0.
		\end{align*}
		Thus, we finish the proof.
	\end{proof}
	Since $i_VR^\nabla$ is not a 1-form with compact support, we also need the following lemma.
	\begin{lemma}\label{energy limit}
		Assume $\nabla$ is a weakly stable Yang-Mills connection on $M$ with $R^\nabla\in L^\infty(M)\cap L^2(M)$. Then we have
		\begin{align*}
			\liminf_{R\to+\infty}\int_{(\frac2R,R)\times N}\langle \delta^\nabla d^\nabla i_VR^\nabla+\mathfrak{R}_g^\nabla(i_VR^\nabla),i_VR^\nabla\rangle dV_g\ge0
		\end{align*}
	\end{lemma}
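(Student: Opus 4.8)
The plan is to run a cutoff argument: replace the non-compactly-supported test form $i_VR^\nabla$ by compactly supported approximations $\eta_R\,i_VR^\nabla$, apply weak stability to each, and let $R\to\infty$. Write $B_0=i_VR^\nabla$ and recall $V=f\frac{\partial}{\partial r}=fT$, so that $B_0=f\,i_TR^\nabla$ as a $1$-form. By Lemma \ref{S(i_VR) in W-P} the integrand in the statement is the pointwise scalar
\[
\langle\delta^\nabla d^\nabla B_0+\mathfrak{R}_g^\nabla(B_0),B_0\rangle=(n-4)f^{-1}f''|B_0|^2=(n-4)ff''|i_TR^\nabla|^2 .
\]
Since $|i_TR^\nabla|^2\le|R^\nabla|^2$ and, by condition (b), $|ff''|\le C$, this integrand is dominated by $C|R^\nabla|^2\in L^1(M)$; this is what lets me throw away boundary contributions at the end.

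For each large $R$ I would fix a cutoff $\eta_R\in C_c^\infty(I)$ depending only on $r$, with $0\le\eta_R\le1$, $\eta_R\equiv1$ on $[\tfrac{2}{R},R]$ and $\operatorname{supp}\eta_R\subset[\tfrac{1}{R},2R]$. The essential requirement is a gradient bound $|\eta_R'|\le C/f$ on the two transition annuli $(\tfrac{1}{R},\tfrac{2}{R})$ and $(R,2R)$. This is exactly where (c) and (d) enter: from $f(f'+1)=O(r)$ and $f>0$ one gets $ff'\le Cr$, hence $(f^2)'\le 2Cr$ and so $f=O(r)$ near each end of $I$; consequently $\int_{R}^{2R}f^{-1}\,dr$ and $\int_{1/R}^{2/R}f^{-1}\,dr$ are bounded below by a positive constant. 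Taking $\eta_R'$ proportional to $\pm1/f$ on each annulus then realizes the transition while keeping $(\eta_R')^2f^2$ bounded uniformly in $R$.

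Next, since $\eta_R$ is a scalar, $d^\nabla(\eta_RB_0)=d\eta_R\wedge B_0+\eta_R\,d^\nabla B_0$ and $\mathfrak{R}_g^\nabla(\eta_RB_0)=\eta_R\mathfrak{R}_g^\nabla(B_0)$; substituting into $\mathscr{L}(\eta_RB_0)=\int_M|d^\nabla(\eta_RB_0)|^2+\langle\mathfrak{R}_g^\nabla(\eta_RB_0),\eta_RB_0\rangle\,dV_g$ and integrating by parts against the compactly supported form $\eta_R^2B_0$, the cross terms $2\int_M\eta_R\langle d\eta_R\wedge B_0,d^\nabla B_0\rangle$ cancel, leaving
\[
\mathscr{L}(\eta_RB_0)=\int_M\eta_R^2\langle\delta^\nabla d^\nabla B_0+\mathfrak{R}_g^\nabla(B_0),B_0\rangle\,dV_g+\int_M|d\eta_R\wedge B_0|^2\,dV_g .
\]
Because $i_TB_0=R^\nabla(V,T)=fR^\nabla(T,T)=0$, the error term simplifies: $|d\eta_R\wedge B_0|^2=(\eta_R')^2|B_0|^2$, so $\int_M|d\eta_R\wedge B_0|^2=\int_M(\eta_R')^2f^2|i_TR^\nabla|^2\le C\int_{A_R}|R^\nabla|^2$, where $A_R=\big((\tfrac{1}{R},\tfrac{2}{R})\cup(R,2R)\big)\times N$. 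Since $\eta_RB_0$ is compactly supported, weak stability gives $\mathscr{L}(\eta_RB_0)\ge0$. Splitting the main integral via $\eta_R^2=\mathbf{1}_{[\frac{2}{R},R]}+\eta_R^2\mathbf{1}_{A_R}$ then yields
\[
\int_{(\frac{2}{R},R)\times N}(n-4)f^{-1}f''|B_0|^2\,dV_g\ge-\int_{A_R}\eta_R^2|(n-4)f^{-1}f''|\,|B_0|^2\,dV_g-C\int_{A_R}|R^\nabla|^2\,dV_g .
\]
Both terms on the right are bounded by $C'\int_{A_R}|R^\nabla|^2$, and since $R^\nabla\in L^2(M)$ while the annuli $A_R$ escape to $\partial I$ as $R\to\infty$, dominated convergence forces $\int_{A_R}|R^\nabla|^2\to0$. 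Taking $\liminf_{R\to\infty}$ gives the claim.

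The main obstacle is the construction in the second paragraph: one must check that the growth encoded in (c),(d) is exactly enough to spread the cutoff across the annuli $(\tfrac{1}{R},\tfrac{2}{R})$ and $(R,2R)$ with $(\eta_R')^2f^2$ bounded, i.e. that $\int_{\mathrm{annulus}}f^{-1}$ stays bounded below (this forces the behaviour of $f$ at both ends of $I$ to be genuinely used), while simultaneously ensuring $\eta_RB_0$ is an admissible finite-energy variation — which is guaranteed by $R^\nabla\in L^\infty(M)\cap L^2(M)$ on the compact support of $\eta_R$. Everything else is bookkeeping once the cross-term cancellation is in hand.
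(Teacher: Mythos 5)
Your proposal is correct and follows the same overall strategy as the paper (cut off $i_VR^\nabla$ by a radial function $\eta_R$, invoke weak stability of the compactly supported variation, and kill the annular error terms using $R^\nabla\in L^2$), but the way you handle the error terms is genuinely different and cleaner. The paper computes $\delta^\nabla d^\nabla(\eta_R i_VR^\nabla)$ pointwise, which produces a first-order term $-2\eta_R'\nabla_TR^\nabla(V,e_j)$ that must then be removed by a further integration by parts of $\eta_R\eta_R'T(|i_VR^\nabla|^2)$; the resulting annular integrand $\bigl((n-4)\eta_R^2f^{-1}f''+\eta_R'^2-2\eta_R\eta_R'f^{-1}f'\bigr)|i_VR^\nabla|^2$ then has to be estimated term by term using (b), (c), (d). You instead use the exact identity
\begin{align*}
\mathscr{L}(\eta_RB_0)=\int_M\eta_R^2\langle\delta^\nabla d^\nabla B_0+\mathfrak{R}^\nabla_g(B_0),B_0\rangle\,dV_g+\int_M|d\eta_R\wedge B_0|^2\,dV_g,
\end{align*}
obtained by integrating $\langle d^\nabla B_0,d^\nabla(\eta_R^2B_0)\rangle$ by parts, together with the observation $i_TB_0=0$ to reduce $|d\eta_R\wedge B_0|^2$ to $(\eta_R')^2|B_0|^2$. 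This avoids all derivative-of-curvature terms and makes the sign structure transparent; the price is only that you must still control $(\eta_R')^2f^2$ and $\eta_R^2|ff''|$ on the annuli, which is where (b)--(d) enter, exactly as in the paper. Your cutoff with $\eta_R'\propto 1/f$ is a legitimate alternative to the paper's $|\eta_R'|\le C_kR^{\mp k}$; under $f=O(r)$ the two are interchangeable.

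One caveat, which you share with the paper rather than introduce: the deduction of $f=O(|r-a|)$ near a \emph{finite} endpoint from condition (d) as literally stated is not airtight. From $f(f'+1)=O(|r-a|)$ and $f>0$ you get $(f^2)'\le C|r-a|$ plus lower-order terms, and integrating this one-sided bound from $\epsilon$ toward $r$ only yields $f^2(r)\le f^2(\epsilon)+C|r-a|^2$; without knowing $f\to0$ at the endpoint (e.g.\ $f(r)=1-r$ near $r=0$ satisfies (d) with $a=0$ but is not $O(r)$) you cannot conclude. The paper's own proof silently uses $f(|f'|+1)=O(r)$, i.e.\ reads (c), (d) with $|f'|$ in place of $f'$, which does give $f=O(|r-a|)$ directly; under that intended reading both your construction of $\eta_R$ and your bound $(\eta_R')^2f^2\le C$ are justified, and the rest of your argument closes without further issue.
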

	\begin{proof}
		We only need to prove the case where $I=(0,+\infty)$, as other cases can be handled by analogous arguments. For any $R>0$, let $\eta_R\in C^\infty([0,+\infty))$ be a cut-off function such that $\eta_R(r)\in[0,1]$ and
		\begin{align*}
			&\eta_R=1\textrm{ on }(\frac2R,R),\\
			&\eta_R=0\textrm{ on }[0,\frac1R)\cup(2R,+\infty),\\
			&|d^k\eta_R|\le C_kR^{-k}\textrm{ on }(R,2R),\\
			&|d^k\eta_R|\le C_kR^k\textrm{ on }(\frac1R,\frac2R)
		\end{align*}
		for any $k\in\mathbb{N}$ and some constant $C_k>0$. For any $x\in M$, assume $D e_i=0$ at $x$, then we have
		\begin{align*}
			&\delta^\nabla d^\nabla(\eta_Ri_VR^\nabla)(e_j)\\
			=&-\sum_{i=1}^n\nabla_{e_i}(\nabla_{e_i}(\eta_Ri_VR^\nabla)(e_j)-\nabla_{e_j}(\eta_Ri_VR^\nabla)(e_i))\\
			=&-\sum_{i=1}^n\nabla_{e_i}(\eta_Rd^\nabla i_VR^\nabla(e_i,e_j)+e_i(\eta_R)i_VR^\nabla(e_j)-e_j(\eta_R)i_VR^\nabla(e_i))\\
			=&\eta_R\delta^\nabla d^\nabla i_VR^\nabla(e_j)-\eta_R''i_VR^\nabla(e_j)-\eta_R'\nabla_Ti_VR^\nabla(e_j)-\eta_R' d^\nabla i_VR^\nabla(T,e_j)
		\end{align*}
		at $x$. By direct calculation, we have
		\begin{align*}
			\nabla_Ti_VR^\nabla=d^\nabla i_VR^\nabla(T,e_j)=\nabla_TR^\nabla(V,e_j)+f^{-1}f'R^\nabla(V,e_j).
		\end{align*}
		Thus we have
		\begin{align*}
			\delta^\nabla d^\nabla(\eta_Ri_VR^\nabla)(e_j)=\eta_R\delta^\nabla d^\nabla i_VR^\nabla(e_j)-(\eta_R''+2\eta_R'f^{-1}f')i_VR^\nabla(e_j)-2\eta_R'\nabla_TR^\nabla(V,e_j).
		\end{align*}
		Since $\nabla$ is weakly stable, we have
		\begin{align*}
			&0\le\mathscr{L}(\eta_R(r)\cdot i_VR^\nabla)=\int_M\langle\delta^\nabla d^\nabla(\eta_R(r)\cdot i_VR^\nabla)+\mathfrak{R}^\nabla_g(\eta_R(r)\cdot i_VR^\nabla),\eta_R(r)\cdot i_VR^\nabla\rangle dV_g\\
			=&\int_M\eta_R^2(r)\langle\delta^\nabla d^\nabla i_VR^\nabla+\mathfrak{R}^\nabla_g(i_VR^\nabla),i_VR^\nabla\rangle-\eta_R(\eta_R''+2\eta_R'f^{-1}f')|i_VR^\nabla|^2-\eta_R\eta_R'T(|i_VR^\nabla|^2) dV_g\\
			=&\int_{(\frac2R,R)\times N}\langle\delta^\nabla d^\nabla i_VR^\nabla+\mathfrak{R}^\nabla_g(i_VR^\nabla),i_VR^\nabla\rangle dV_g\\&+\int_{((\frac1R,\frac2R)\cup(R,2R))\times N}((n-4)\eta_R^2f^{-1}f''+\eta_R'^2-2\eta_R\eta_R'f^{-1}f')|i_VR^\nabla|^2dV_g.
		\end{align*}
		Since $|i_VR^\nabla|^2\le 2f^2|R^\nabla|^2$ and $R^\nabla\in L^2(M)$, using $ff''\in L^\infty((0,+\infty))$ and $f(|f'|+1)=O(r)$ as $r\to+\infty$, we have
		\begin{align*}
			\int_{(R,2R)\times N}((n-4)\eta_R^2f^{-1}f''+\eta_R'^2-2\eta_R\eta_R'f^{-1}f')|i_VR^\nabla|^2dV_g\le C\int_{(R,2R)\times N}|R^\nabla|^2dV_g\to0
		\end{align*}
		as $R\to+\infty$. Using $f(r)(f'(r)+1)=O(r)$ at 0, we have
		\begin{align*}
			\int_{(\frac1R,\frac2R)\times N}((n-4)\eta_R^2f^{-1}f''+\eta_R'^2-2\eta_R\eta_R'f^{-1}f')|i_VR^\nabla|^2dV_g\le C\int_{(\frac1R,\frac2R)\times N}|R^\nabla|^2dV_g\to0.
		\end{align*}
		Now we complete the proof of the lemma.
	\end{proof}
	Combining Lemma \ref{S(i_VR) in W-P} and Lemma \ref{energy limit}, we obtain the Theorem \ref{thm 3}
	\begin{theorem}
		If $(n-4)f''<0$ everywhere and $\nabla$ is a weakly stable Yang-Mills connection on $M$ with $R^\nabla\in L^\infty(M)\cap L^2(M)$, then we have
		\begin{align*}
			i_VR^\nabla=0.
		\end{align*}
	\end{theorem}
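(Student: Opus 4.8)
The plan is to feed the pointwise identity of Lemma~\ref{S(i_VR) in W-P} into the integral inequality of Lemma~\ref{energy limit}. Substituting $\delta^\nabla d^\nabla i_VR^\nabla+\mathfrak{R}_g^\nabla(i_VR^\nabla)=(n-4)f^{-1}f''\,i_VR^\nabla$ into the integrand appearing in Lemma~\ref{energy limit}, that integrand becomes $(n-4)f^{-1}f''\,|i_VR^\nabla|^2$, so the weak-stability bound reads
\begin{align*}
\liminf_{R\to+\infty}\int_{(\frac2R,R)\times N}(n-4)f^{-1}f''\,|i_VR^\nabla|^2\,dV_g\ge0.
\end{align*}
The hypothesis $(n-4)f''<0$ together with $f>0$ forces $(n-4)f^{-1}f''<0$ everywhere, so the integrand is pointwise nonpositive.

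Next I would legitimize passing from these truncated integrals to an integral over all of $M$. From $|i_VR^\nabla|^2\le 2f^2|R^\nabla|^2$ and the bound $ff''\in L^\infty(I)$ from condition (b) of \eqref{condition for f}, one gets $|(n-4)f^{-1}f''|\,|i_VR^\nabla|^2\le C|R^\nabla|^2$, which lies in $L^1(M)$ since $R^\nabla\in L^2(M)$; thus the integrand is globally integrable. As the exhausting regions $(\frac2R,R)\times N$ increase to $M$ (treating the model case $I=(0,+\infty)$, the other interval types being identical after adjusting the cut-off), dominated convergence gives that the limit of the truncated integrals exists and equals $\int_M(n-4)f^{-1}f''\,|i_VR^\nabla|^2\,dV_g$. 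Now the squeeze closes: the liminf is $\ge0$ by Lemma~\ref{energy limit}, while every truncated integral is $\le0$ because the integrand is nonpositive, so the common value is $0$. A nonpositive integrable function with vanishing integral is zero almost everywhere, and since $(n-4)f^{-1}f''$ is strictly negative everywhere this forces $|i_VR^\nabla|^2=0$ almost everywhere, hence $i_VR^\nabla\equiv0$ by continuity.

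I expect essentially all of the genuine content to sit in the two preparatory lemmas, so the main obstacle in this final step is not a deep computation but the technical bookkeeping: verifying global integrability of the integrand from the $L^\infty\cap L^2$ hypothesis on $R^\nabla$ and condition (b), justifying the exhaustion/convergence that upgrades the truncated inequalities to a statement over all of $M$, and keeping the sign of $(n-4)f^{-1}f''$ straight so that the nonpositivity of the integrand combines correctly with the $\ge0$ liminf.
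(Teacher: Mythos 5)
Your argument is correct and is exactly the step the paper leaves implicit when it says the theorem follows by ``combining'' Lemma~\ref{S(i_VR) in W-P} and Lemma~\ref{energy limit}: substitute the pointwise identity into the liminf inequality, note the integrand $(n-4)f^{-1}f''\,|i_VR^\nabla|^2$ is nonpositive, and use the domination $|i_VR^\nabla|^2\le 2f^2|R^\nabla|^2$ together with $ff''\in L^\infty$ and $R^\nabla\in L^2$ to pass to the full integral over $M$, forcing it to vanish. No gaps; this matches the paper's intended proof.
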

	\begin{remark}
		Note that the standard sphere with two antipodal points removed  $S^n\backslash\{{(0,...,0,\pm1)}\}=(0,\pi)\times S^{n-1}$ with $g_{S^n}=drdr+\sin^2rg_{S^{n-1}}$ is a warped product manifold, and $f(r)=\sin r$ satisfies the condition in this section. Moreover, we have $f''(r)=-\sin r<0$ in $(0,\pi)$. Since $\frac{\partial}{\partial r}$ may take any direction, we can proof the existence of weakly stable Yang-Mills connections on $S^n$ for $n\ge5$. In fact, the variation constructed in this manner coincides precisely with that selected in \cite{BL}. As a corollary, we have the following proposition.
	\end{remark}
	\begin{corollary}
		Assume that $M$ is an n-dimensional ellipsoid with $n\ge5$. Then there is no weakly stable Yang-Mills connections on $M$.
	\end{corollary}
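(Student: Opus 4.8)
The plan is to deduce, from Theorem~\ref{thm 3}, that any weakly stable Yang--Mills connection $\nabla$ on the ellipsoid $M$ has $R^\nabla\equiv0$, following the strategy of the Remark for $S^n$. Since $R^\nabla$ is a $2$-form, $R^\nabla\equiv0$ is equivalent to $i_XR^\nabla=0$ for every tangent vector $X$ at every point. Thus the whole problem reduces to realizing $M$ (with two points deleted) as a warped product $dr\,dr+f^2(r)g_N$ whose radial field $\partial/\partial r$ runs, as the structure is varied, through a spanning set of directions at each point, and then applying Theorem~\ref{thm 3} to each such structure.

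I would first dispose of the analytic hypotheses. As $M$ is compact and $\nabla$ is a smooth Yang--Mills connection, $R^\nabla$ is smooth, hence $R^\nabla\in L^\infty(M)\cap L^2(M)$ automatically; deleting two points (a null set) does not affect this. It then remains to verify the geometric hypotheses for the warped product at hand, namely conditions $(a)$--$(d)$ of (\ref{condition for f}) together with $(n-4)f''<0$. For an ellipsoid of revolution these hold: taking $r$ to be arc length along the profile ellipse and $f(r)$ the distance to the axis, the slices are round $S^{n-1}$'s, so $N=S^{n-1}$ and $M\setminus\{\text{poles}\}=(0,L)\times S^{n-1}$. Parametrizing the profile by $f=\alpha\sin\theta,\ h=\beta\cos\theta$ and passing to arc length gives $f'(r)=\alpha\cos\theta\,(\beta^2+(\alpha^2-\beta^2)\cos^2\theta)^{-1/2}$, which is strictly decreasing in $r$, so $f''<0$ and hence $(n-4)f''<0$ for $n\ge5$. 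Near each pole $f(r)=r+O(r^3)$, giving $f'\to1$ and $f(f'+1)=O(r)$, so $(c)$--$(d)$ hold, while $(a),(b)$ are clear. Theorem~\ref{thm 3} then yields $i_{\partial/\partial r}R^\nabla=0$, i.e. the vanishing of the curvature contracted with the meridian direction.

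The remaining, and decisive, step is to let the structure vary so that $\partial/\partial r$ sweeps out all of $T_pM$. For the round sphere this is immediate: every unit tangent vector is the meridian direction for a suitable pair of antipodal poles, so the single identity $i_{\partial/\partial r}R^\nabla=0$ already forces $R^\nabla\equiv0$, proving the statement in the round case. This is exactly where the main obstacle lies for a genuine ellipsoid: a general ellipsoid is a surface of revolution only about its axes of symmetry (and, if all semi-axes are distinct, about none), so the meridian directions produced this way do not span $T_pM$ and Theorem~\ref{thm 3} alone is not enough. To close the gap I would either search for warped product foliations beyond the revolution ones, or, more promisingly, transport the problem to $(S^n,\Phi^\ast g)$ via the linear diffeomorphism $\Phi(x)=(x_1/\sqrt{a_1},\dots,x_{n+1}/\sqrt{a_{n+1}})$ and re-run the second variation computation of Proposition~\ref{sum L(B_V_k)} with the conformal fields $V=\operatorname{grad}(f_v)$, now for the (non-conformal) metric $\Phi^\ast g$. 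Controlling the extra curvature terms coming from the non-conformal factor is the crux; establishing the analogue of Proposition~\ref{sum L(B_V_k)} in this generality would complete the proof.
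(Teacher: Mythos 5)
Your reduction is the right one and the intermediate verifications are sound: $R^\nabla\in L^\infty\cap L^2$ is automatic for a smooth connection on a compact manifold, and for an ellipsoid of revolution the arc-length profile does satisfy conditions (a)--(d) of (\ref{condition for f}) with $f''<0$ (your formula for $f'$ is correct, and it is increasing in $\cos\theta$, hence decreasing in $r$). But as a proof of the corollary the proposal is incomplete, and you have correctly located where it breaks. Theorem \ref{thm 3} only yields $i_{\partial/\partial r}R^\nabla=0$ for the radial field of one warped-product structure, so to conclude $R^\nabla=0$ one needs a family of such structures whose radial directions span every tangent space. For the round sphere this holds because every unit vector is the meridian direction for a suitable pair of antipodal poles. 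For an ellipsoid of revolution the axis of symmetry is unique, so this route gives only the vanishing of $R^\nabla$ contracted with the single meridian field. For a triaxial ellipsoid no warped-product presentation of the required form is exhibited at all: the coordinate slices are homothetic ellipsoids, but the homothety curves joining them fail to be orthogonal to the slices once the $a_i$ differ, so the induced metric is not of the form $dr\,dr+f^2(r)g_N$ under that slicing. Neither of your two proposed repairs is carried out, so the argument establishes the statement only in the round case.

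For what it is worth, the paper does not fill this gap either: its entire justification is the Remark preceding the corollary, which treats exactly the round sphere ($f=\sin r$, $f''<0$, ``$\partial/\partial r$ may take any direction'') and then asserts the ellipsoid statement with no further argument. So your proposal reproduces the paper's strategy faithfully and, to its credit, identifies rather than overlooks the missing step; a complete proof would require either producing sufficiently many warped-product foliations of a general ellipsoid or an independent second-variation computation along the lines you sketch at the end.
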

	
	\section*{Acknowledgement}
	
	The first author is supported by National Key R$\&$D Program of China 2022YFA1005400.

\end{document}